\newtheorem{theorem}{Theorem}
\newtheorem{prop}[theorem]{Proposition}
\newtheorem{definition}[theorem]{Definition}
\theoremstyle{remark}
\newtheorem{example}[theorem]{Example}
\newtheorem{remark}[theorem]{Remark}
\newtheorem{problem*}{Problem}
\newtheorem{remark*}{Remark}
\newtheorem{convention*}{Convention}
\newtheorem{notation*}{Notation}
\newtheorem{examples*}{Examples}
\newtheorem{example*}{Example}
\newtheorem{warning*}{Warning}
\def\R{{\mathbb R}}
\def\Z{{\mathbb Z}}
\def\H{{\mathcal H}}
\begin{document}
\title[Generalized low-pass filters and multiresolution analyses]{Generalized filters, the low-pass condition,
and connections to multiresolution analyses}
\author[L. W. Baggett]{Lawrence~W.~Baggett}
\address{Lawrence Baggett, Department of Mathematics, University of Colorado, Boulder, Colorado 80309, USA}
\email{baggett@euclid.colorado.edu}
\author[V. Furst]{Veronika~Furst}
\address{Veronika Furst, Department of Mathematics, Fort Lewis College, Durango, Colorado 81301, USA}
\email{furst\_v@fortlewis.edu}
\author[K. D. Merrill]{Kathy~D.~Merrill}
\address{Kathy Merrill, Department of Mathematics, Colorado College, Colorado Springs, Colorado, 80903, USA}
\email{kmerrill@coloradocollege.edu}
\author[J. A. Packer]{Judith~A.~Packer}
\address{Judith Packer, Department of Mathematics, University of Colorado, Boulder, Colorado 80309, USA}\email{packer@euclid.colorado.edu}
\begin{abstract}
We study generalized filters that are associated to multiplicity functions
and homomorphisms of the dual of an abelian group.
These notions are based on the structure of generalized
multiresolution analyses.  We investigate when the Ruelle operator corresponding to such a filter
is a pure isometry, and then use that characterization
to study the problem of when
a collection of closed subspaces, which satisfies all the conditions
of a GMRA except the trivial intersection condition,
must in fact have a trivial intersection.
In this context, we obtain a generalization of a theorem of Bownik and Rzeszotnik.
\end{abstract}

\maketitle
\section{Introduction}
Filters have historically been an essential tool used in both building and analyzing wavelets and multiresolution structures.  In particular, filters traditionally called ``low-pass" arise naturally from refinement equations for multiresolution analyses (MRAs) and generalized multiresolution analyses (GMRAs).   Beginning with work of Mallat \cite{Ma} and Meyer  \cite{Me},  the process of defining filters from a multiresolution structure was also reversed; that is, functions that behave like low-pass filters have been used to build the structures.  This construction technique has been remarkably fruitful, producing, for example, the smooth and well-localized wavelets of Daubechies \cite{Da}.  In generalizing this procedure to allow less restrictive conditions on the filters as well as on the setting, for example in  \cite{BJMP}, \cite{IJKLN}, and \cite{BJ}, properties of an operator associated with the filter, called a Ruelle operator, are used to justify this construction.  The essential ingredient is that the Ruelle operator be a pure isometry.  A theorem giving general conditions under which the Ruelle operator is a pure isometry in the case of an integer dilation in $L^2(\mathbb T)$ appeared in \cite{BJ}. 

In this paper we derive a similar theorem (Theorem \ref{eigenvalue} in Section \ref{pure}) in a quite general context.  We then exploit this theorem both in analyzing multiresolution structures and in building them.  Our central result of the first type addresses the question of when a structure that satisfies all the properties of a GMRA except possibly the trivial intersection property, must satisfy that as well.  This generalizes work of Bownik and collaborators (\cite{Bow}, \cite{BR}).  Our main result of the second type is to show that very little in the way of a low-pass condition is needed when building GMRAs from filters using direct limits as in \cite{IJKLN} and \cite{AIJLN}.  

Our general context is as follows:  
Let $\Gamma$ be a countable abelian group (written additively) with dual group
$\widehat\Gamma$ (written multiplicatively), equipped with Haar measure $\mu$ (of total mass 1).  
  Let $\alpha$ be an isomorphism of $\Gamma$ into itself, and suppose that the index of $\alpha(\Gamma)$ in $\Gamma$ equals $N>1.$
Assume further that $\cap_{n\geq 0}\,\alpha^n(\Gamma) = \{0\}.$
Write $\alpha^*$ for the dual endomorphism of $\widehat\Gamma$ onto itself defined by
$[\alpha^*(\omega)](\gamma) = \omega(\alpha(\gamma)),$ and note that
the kernel of $\alpha^*$ contains exactly $N$ elements
and that $\alpha^*$ is ergodic with respect to the Haar measure  on $\widehat\Gamma.$
Write $K = \cup_{n> 0} \ker({\alpha^*}^n),$
and note that, because $\cap_{n\geq 0}\,\alpha^n(\Gamma) = \{0\},$
 $K$ is dense in $\widehat\Gamma.$  
\par
Of course the standard example (e.g., from wavelet theory) of these ingredients is
where $\Gamma=\mathbb Z,$ $\widehat\Gamma=\mathbb T,$ and $\alpha(k)=2k.$
Or, more generally, $\Gamma=\mathbb Z^d,$ and $\alpha(\vec x) = A\vec x,$
where $A$ is a $d\times d$ integer dilation matrix of determinant $N.$
\par
Let $m:\widehat\Gamma \to \{0,1,2,\ldots,\infty\}$ be a Borel map into the set of nonnegative integers union $\infty,$ and write $\sigma_i$ for $\{\omega\in\widehat\Gamma : m(\omega)\geq i\}.$
Note that 
\begin{equation*}\label{m-equation}
m(\omega) = \sum_i \chi_{\sigma_i}(\omega).
\end{equation*}
We remark that such functions $m$ arise, via Stone's Theorem on unitary representations of abelian groups, as multiplicity functions
associated to such representations of $\Gamma,$
and we will in fact invoke this relationship in the final section.  In that context, we will make use of 
a unitary representation $\pi$ of $\Gamma$,  
acting in a Hilbert space $\mathcal H,$ and a unitary operator $\delta$ 
 on $\mathcal H$ for which
\[
\delta^{-1}\pi_\gamma \delta = \pi_{\alpha(\gamma)}
\]
for all $\gamma\in \Gamma.$
\par
In this general setting, we define a filter as follows:  

\begin{definition}
\label{filter}
A (possibly infinite) matrix $H=[h_{i,j}]$ of Borel, complex-valued functions
on $\widehat\Gamma$ is called a {\bf filter} relative to $m$ and $\alpha^*$ if,
for every $j,$ $h_{i,j}$ is supported in $\sigma_j,$ and $H$
satisfies the following ``filter equation:''
\begin{equation}\label{filterequation}
\sum_{\alpha^*(\zeta)=1} \sum_j h_{i,j}(\omega\zeta)\overline{h_{i',j}(\omega\zeta)}
= N\delta_{i,i'}\chi_{\sigma_i}(\alpha^*(\omega))
\end{equation}
for almost all $\omega\in\widehat\Gamma.$
\end{definition}
In the standard situation described above, i.e., where $\Gamma=\mathbb Z,$
$\alpha(k) = 2k,$ and where $m$ is the identically 1 function,
a filter relative to $m$ and $\alpha^*$ is just a $1\times1$ matrix (function) $h,$ and the
filter equation becomes
\[
|h(z)|^2 + |h(-z)|^2 = 2,
\]
for almost all $z\in\mathbb T$, which is the 
classical equation satisfied by a quadrature mirror filter.
These are the filters that played a central role in the
early theory of multiresolution analyses and wavelets in $L^2(\mathbb R).$
Indeed, in the classical case, where $\phi$ is a scaling function
for an MRA in $L^2({\mathbb R}),$ we know that
the integral translates $T_n(\phi) = \phi(\cdot-n)$ form an orthonormal basis for the core subspace $V_0,$
and we may define a unitary operator $J$ from $V_0$ onto $L^2(\mathbb T)$
by sending the basis vector $T_n(\phi)$ to the function $z^n.
$  This correspondence between
an orthonormal basis of $V_0$ with the canonical Fourier basis for $L^2(\mathbb T)$
is clearly a unitary operator.
  Furthermore, $J$ sends the
element 
$\phi(x/2)/\sqrt2  = \sum c_n T_n(\phi)$
to the function $\sum_n c_nz^n = h(z),$
where $h$ is the associated quadrature mirror filter.
We notice that, in addition to the fact that $h$ satisfies the
quadrature mirror equation,
it satisfies another condition.
Namely, if $\delta$ denotes the dilation operator on $L^2(\mathbb R)$ given by
$[\delta(f)](x) = \sqrt2 f(2x),$
then one can verify that the operator $J\circ \delta^{-1}\circ J^{-1}$ on $L^2(\mathbb T)$ is given by
\[
[[J\circ \delta^{-1}\circ J^{-1}](f)](z) = h(z) f(z^2) = [S_h(f)](z)
\]
for every $f\in L^2(\mathbb T).
$
We will call such an operator $S_h$ a {\em Ruelle operator.
}
Because $\delta^{-1}$ is an isometry on $V_0,$ and
$\cap \text{ Range}(\delta^{-n}) = \cap V_n = \{0\},$
it follows that the operator $S_h$ has these same properties.
That is, $S_h$ is a ``pure isometry.''
\par
In the next section we define a Ruelle operator $S_H$ similarly associated with an abstract filter $H$ as in Definition \ref{filter}, and present our first main result, a characterization of when
this Ruelle operator is a pure isometry.  The final two sections contain the applications of this result.  
\section{Filters and pure isometries}
\label{pure}
Let $H$ be a filter relative to  $m$ and $\alpha^*$.   Whenever the formula
\begin{align*}
[S_H(f)](\omega) &= H^t(\omega)f(\alpha^*(\omega)) \cr
& = \bigoplus_j \sum_i H_{i,j}(\omega) f_i(\alpha^*(\omega)),
\end{align*}
defines a bounded operator from $\bigoplus_i L^2(\sigma_i,\mu)$ into itself, we will call $S_H$ the Ruelle operator associated to $H$.  In the contexts we study in this paper, this will always be the case.  

We now prove a generalization of the filter equation of Definition \ref{filter} that will provide a crucial step in determining when the Ruelle operator $S_H$ is an isometry.  If the function $m$  associated to $H$ is finite a.e., this proposition follows from the standard filter equation by induction (see Lemma 9 in \cite{IJKLN}).  However, without this restriction, it requires a more careful argument exploiting the fact that, in the situations we study, $S_H$ is an isometry.  
 \begin{prop}\label{superfilter}
Let $H$ be a filter relative to $m$ and $\alpha^*$, and assume that the associated Ruelle operator $S_H$ is an isometry.  Then:
\begin{equation*}
 \frac1{N^n} \sum_{{\alpha^*}^n(\zeta)=1} \sum_i
\left [\prod_{k=0}^{n-1} H^t({\alpha^*}^k(\omega\zeta))\right]_{i,j}
\overline{\left[\prod_{k'=0}^{n-1}H^t({\alpha^*}^{k'}(\omega\zeta))\right]}_{i,j'}
= \delta_{j,j'}\chi_{\sigma_j}(\alpha^{*n}(\omega)).
\end{equation*}
\end{prop}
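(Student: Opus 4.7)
The plan is to proceed by induction on $n$.  For the base case $n=1$, we have $M_1=H^t$, so $[M_1(\omega)]_{i,j}=h_{j,i}(\omega)$, and the left-hand side of the claimed identity becomes
\[
\frac{1}{N}\sum_{\alpha^*(\zeta)=1}\sum_i h_{j,i}(\omega\zeta)\overline{h_{j',i}(\omega\zeta)},
\]
which, by the filter equation \eqref{filterequation} applied with $(j,j')$ playing the role of $(i,i')$ (and the summed column index renamed), equals $\delta_{j,j'}\chi_{\sigma_j}(\alpha^*(\omega))$.

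For the inductive step, I would use the factorization
\[
M_n(\omega)=H^t(\omega)\,M_{n-1}(\alpha^*(\omega)),
\]
equivalently $[M_n(\omega)]_{i,j}=\sum_k h_{k,i}(\omega)[M_{n-1}(\alpha^*(\omega))]_{k,j}$.  Substituting this into the LHS and expanding the modulus squared produces a sum over the four indices $\zeta,i,k,k'$.  I would then reorganize the outer sum using that $\alpha^*$ maps $\ker({\alpha^*}^n)$ onto $\ker({\alpha^*}^{n-1})$ with fibers of size $N$: each $\zeta\in\ker({\alpha^*}^n)$ factors as $\zeta_\eta\xi$ with $\eta=\alpha^*(\zeta)\in\ker({\alpha^*}^{n-1})$ and $\xi\in\ker(\alpha^*)$, and since the $M_{n-1}$-factors depend on $\zeta$ only through $\alpha^*(\omega\zeta)=\alpha^*(\omega)\eta$, they pull out of the inner $\xi$-sum.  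The surviving inner sum $\sum_\xi\sum_i h_{k,i}(\omega\zeta_\eta\xi)\overline{h_{k',i}(\omega\zeta_\eta\xi)}$ equals $N\delta_{k,k'}\chi_{\sigma_k}(\alpha^*(\omega)\eta)$ by the filter equation applied at $\omega\zeta_\eta$; the indicator is in fact redundant, since $[M_{n-1}(\omega'')]_{k,\cdot}$ is automatically supported on $\sigma_k$ as a function of $\omega''$ (inherited from the leading factor $h_{\cdot,k}(\omega'')$ in the product expansion of $M_{n-1}$).  What remains is
\[
\frac{1}{N^{n-1}}\sum_{\eta\in\ker({\alpha^*}^{n-1})}\sum_k [M_{n-1}(\alpha^*(\omega)\eta)]_{k,j}\overline{[M_{n-1}(\alpha^*(\omega)\eta)]_{k,j'}},
\]
which by the inductive hypothesis applied at base point $\alpha^*(\omega)$ evaluates to $\delta_{j,j'}\chi_{\sigma_j}({\alpha^*}^n(\omega))$.

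The main obstacle is ensuring that the infinite matrix product $H^t(\omega)M_{n-1}(\alpha^*(\omega))$ and the subsequent interchange of summations actually make sense when $m$ is not finite a.e.; this is precisely the point at which the isometry hypothesis on $S_H$ is used, as flagged in the paragraph preceding the proposition.  The inductive hypothesis at level $n-1$ with $j=j'$ already gives the column bound $\sum_k|[M_{n-1}(\omega')]_{k,j}|^2\le N^{n-1}$ for a.e.\ $\omega'$, so each column of $M_{n-1}$ lies in $\ell^2$ uniformly.  A complementary $\ell^2$-bound on the columns of $H$ (in the variable $k$) must be extracted from the isometry of $S_H$ tested on vectors supported in a single slot.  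Cauchy--Schwarz then yields absolute convergence of $\sum_k h_{k,i}(\omega)[M_{n-1}(\alpha^*(\omega))]_{k,j}$ and of the $(k,k')$-expansion of its modulus squared for a.e.\ $\omega$, legitimizing every formal manipulation in the inductive step.
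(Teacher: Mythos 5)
Your induction reproduces the argument that the paper deliberately sets aside: the formal skeleton (base case via the filter equation, the factorization $M_n(\omega)=H^t(\omega)M_{n-1}(\alpha^*(\omega))$, fibering $\ker({\alpha^*}^n)$ over $\ker({\alpha^*}^{n-1})$, and the observation that the indicator $\chi_{\sigma_k}$ is absorbed by the support of the $k$-th row of $M_{n-1}$) is essentially the induction of Lemma 9 in \cite{IJKLN}, which the authors note is valid when $m$ is finite a.e. The entire content of the proposition is the case where $m=\infty$ on a set of positive measure, and there your justification of the interchanges does not hold up. First, testing the isometry on vectors supported in a single slot $k$ gives $\sum_j|h_{k,j}(\omega)|^2\leq N$, a bound on the sum over the \emph{second} index with the first fixed --- exactly what the filter equation already provides --- whereas the expansion $\sum_k h_{k,i}(\omega)[M_{n-1}(\alpha^*(\omega))]_{k,j}$ requires control of $\sum_k|h_{k,i}(\omega)|^2$, the sum over the \emph{first} index; single-slot testing cannot see this quantity. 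Second, and more seriously, even granting both $\ell^2$ bounds, Cauchy--Schwarz does not legitimize pulling $\sum_i$ past $\sum_{k,k'}$: the absolute sum $\sum_{i,k,k'}|h_{k,i}|\,|h_{k',i}|\,|[M_{n-1}]_{k,j}|\,|[M_{n-1}]_{k',j'}|$ is controlled only by Hilbert--Schmidt-type quantities such as $\sum_{i,k}|h_{k,i}(\omega)|^2$, which is of order $N\cdot m(\alpha^*(\omega))$ and hence infinite precisely in the regime at issue. So Fubini is not available, and the step where the filter equation is applied inside the $(k,k')$ sum is unjustified as written.

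The gap is repairable, but by a different mechanism from the one you invoke: applying the filter equation to finitely supported vectors $v$ yields $\sum_{\alpha^*(\xi)=1}\|H^t(\omega\xi)v\|^2=N\sum_i|v_i|^2\chi_{\sigma_i}(\alpha^*(\omega))$, whence $\|H^t(\omega)\|\leq\sqrt N$ a.e.\ as an operator on $\ell^2$ (this also delivers the column bound you wanted). One can then run your induction with genuine operator identities, writing $\sum_i[M_n]_{i,j}\overline{[M_n]_{i,j'}}=\langle (H^t)^*H^tM_{n-1}e_j\mid M_{n-1}e_{j'}\rangle$ and using that $\sum_\xi (H^t(\omega\xi))^*H^t(\omega\xi)$ is the bounded diagonal operator $N\operatorname{diag}(\chi_{\sigma_k}(\alpha^*(\omega)))$, rather than by termwise Fubini. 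The paper avoids the issue entirely by a different route: it uses the isometry globally, computing $\langle S_H^nf\mid S_H^ng\rangle=\langle f\mid g\rangle$ for the test functions $f=\chi_{_E}C_j$ and $g=\chi_{_{E'}}C_{j'}$, where the only interchange needed is of $\sum_i\int$ with the finite fiber average, justified by $S_H^nf\in\bigoplus_jL^2(\sigma_j,\mu)$; the pointwise identity is then read off from equality of integrals over arbitrary saturated Borel sets. You should either adopt one of these repairs or restrict your proof to the case $m<\infty$ a.e., which is already known.
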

\begin{proof}
In the calculations below, certain sums and integrals
have to be exchanged.  These exchanges are justified by
Fubini's Theorem, where we rely several times on the fact that
for any $f\in \bigoplus_j L^2(\sigma_j,\mu)$
the element $S_H^n(f)$ is again in
$\bigoplus_j L^2(\sigma_j,\mu),$ and therefore
$\sum_j |[S_H^n(f)]_j(\omega)|^2$ is finite for almost every $\omega.$
\par
For each $f$ and $g$ in $\bigoplus_j L^2(\sigma_j,\mu),$ we have
\begin{align*}
& \sum_i \int_{\widehat\Gamma} \sum_j \left[\prod_{k=0}^{n-1} H^t({\alpha^*}^k(\omega))\right]_{i,j} f_j({\alpha^*}^n(\omega)) 
\overline{\sum_{j'} \left[\prod_{k'=0}^{n-1} H^t({\alpha^*}^{k'}(\omega))\right]_{i,j'} g_{j'}({\alpha^*}^n(\omega))  }\, d\omega \cr
& = \sum_i \int_{\widehat\Gamma} [S_H^n(f)]_i(\omega)
\overline{[S_H^n(g)]_i(\omega)} \, d\omega \cr
& = \langle S_H^n(f) \mid S_H^n(g)\rangle \cr
& = \langle f\mid g\rangle \cr
& = \sum_j \int_{\widehat\Gamma} f_j(\omega) \overline{g_j(\omega)}\, d\omega \cr
& = \sum_j \int_{\widehat\Gamma} f_j({\alpha^*}^n(\omega)) \overline{g_j({\alpha^*}^n(\omega))} \, d\omega.
\end{align*}
Therefore,
\begin{align*}
& \frac{1}{N^n} \sum_{{\alpha^*}^n(\zeta)=1}  \sum_i\int_{\widehat\Gamma} \left(\sum_j \left[\prod_{k=0}^{n-1} H^t({\alpha^*}^k(\omega\zeta))\right]_{i,j} f_j({\alpha^*}^n(\omega))\right) \cr
& \qquad \qquad\qquad\qquad\quad\quad\quad\quad\quad\cdot \left(
\overline{\sum_{j'} \left[\prod_{k'=0}^{n-1} H^t({\alpha^*}^{k'}(\omega\zeta))\right]_{i,j'} g_{j'}({\alpha^*}^n(\omega))}\right)  \, d\omega\\
&= \sum_j \int_{\widehat\Gamma} f_j({\alpha^*}^n(\omega)) \overline{g_j({\alpha^*}^n(\omega))} \, d\omega.
\end{align*}
Write $C_j$ for the element of the direct sum space
$\bigoplus_j L^2(\sigma_j,\mu)$ whose $j$th coordinate is $\chi_{\sigma_j}$
and whose other coordinates are 0.  Set $f=\chi_{_E} C_j,$ for $E\subseteq \sigma_j,$
and $g= \chi_{_{E'}}C_{j'},$ for $E'\subseteq \sigma_{j'}.$  Then we have
\begin{align*}
& \frac1{N^n} \sum_\zeta \sum_i \int_{{\alpha^*}^{-n}(E\cap E')}
\left[\prod_{k=0}^{n-1} H^t({\alpha^*}^k(\omega\zeta))\right]_{i,j}
\overline{\left[\prod_{k'=0}^{n-1} H^t({\alpha^*}^{k'}(\omega\zeta))\right]_{i,j'}} \, d\omega\cr
& = \frac1{N^n} \sum_\zeta \sum_i \int_{\widehat\Gamma}
\left[\prod_{k=0}^{n-1} H^t({\alpha^*}^k(\omega\zeta))\right]_{i,j} \chi_{_E}({\alpha^*}^n(\omega))C_j({\alpha^*}^n(\omega)) \cr
&\qquad\qquad\qquad \qquad \qquad \cdot \ \overline{\left[\prod_{k'=0}^{n-1} H^t({\alpha^*}^{k'}(\omega\zeta))\right]_{i,j'}} \chi_{_{E'}}({\alpha^*}^n(\omega)) C_{j'}({\alpha^*}^n(\omega)) \, d\omega  \cr
& = \delta_{j,j'} \langle \chi_{{{\alpha^*}^{-n}(E)}}C_j \mid \chi_{_{{\alpha^*}^{-n}(E')}}C_{j'} \rangle \cr
& = \delta_{j,j'} \int_{\widehat\Gamma} \chi_{{E\cap E'}}({\alpha^*}^n(\omega))\, d\omega \cr
& = \delta_{j,j'} \int_{{\alpha^*}^{-n}(E\cap E')} 1 \, d\omega.
\end{align*}
Since this is true for any Borel sets $E$ and $E',$ the proposition follows.
\end{proof}

We now prove our first main result, establishing conditions under which a Ruelle operator $S_H$ that is an isometry must in fact be a pure isometry.  This theorem generalizes Theorem 3.1 in \cite{BJ}, which finds a similar conclusion in the setting of integer dilations in $L^2(\mathbb T).$ 

\begin{theorem}
\label{eigenvalue}
Assume that $m$ is finite on a set of positive measure.
If $S_H$ is an isometry on $\bigoplus_j L^2(\sigma_j,\mu),$
then $S_H$ fails to be a pure isometry if and only if
it has an eigenvector.
Specifically, $S_H$ fails to be a pure isometry if and only if
there exists a nonzero element $f\in \bigoplus_j L^2(\sigma_j,\mu),$ and a scalar $\lambda$ of absolute value 1,
such that $S_H(f) = \lambda f.
$
Moreover, if $f$ is a unit eigenvector for $S_H$, then $\|f(\omega)\|=1$ a.e.
\end{theorem}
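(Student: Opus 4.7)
The easy direction is immediate: if $S_H(f)=\lambda f$ with $f\neq 0$ and $|\lambda|=1$, then $f=\lambda^{-n}S_H^n(f)\in S_H^n(\bigoplus_j L^2(\sigma_j,\mu))$ for every $n\geq 0$, so $\mathcal H_\infty:=\bigcap_{n\geq 0}S_H^n(\bigoplus_j L^2(\sigma_j,\mu))$ is nontrivial and $S_H$ is not a pure isometry.

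For the converse, assume $S_H$ is not pure, so $\mathcal H_\infty\neq\{0\}$. By the Wold decomposition, $S_H$ restricts to a unitary $U$ on $\mathcal H_\infty$; fix a nonzero $f\in\mathcal H_\infty$ and let $g_n:=U^{-n}(f)$, so $S_H^n(g_n)=f$ and $\|g_n\|_2=\|f\|_2$ for every $n$. The first step is to establish $L^1$-concentration of $\|g_n\|^2$ via Proposition~\ref{superfilter}. Expanding
\[
f_i(\omega)=\sum_j [M_n(\omega)]_{i,j}\,(g_n)_j({\alpha^*}^n(\omega)),\qquad M_n(\omega):=\prod_{k=0}^{n-1}H^t({\alpha^*}^k(\omega)),
\]
computing $\|f(\omega\zeta)\|^2$, averaging over the fiber $\{{\alpha^*}^n(\zeta)=1\}$, and applying Proposition~\ref{superfilter} gives
\[
\frac{1}{N^n}\sum_{{\alpha^*}^n(\zeta)=1}\|f(\omega\zeta)\|^2 = \|g_n({\alpha^*}^n(\omega))\|^2.
\]
The left side is the conditional expectation of $F:=\|f(\cdot)\|^2$ onto the $\sigma$-algebra $\mathcal F_n$ generated by ${\alpha^*}^n$. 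Since the $L^2$-functions measurable with respect to $\mathcal F_n$ are precisely those with Fourier support in $\alpha^n(\Gamma)$, the hypothesis $\bigcap_n\alpha^n(\Gamma)=\{0\}$ forces the tail $\sigma$-algebra $\bigcap_n\mathcal F_n$ to be trivial, and the reverse martingale theorem yields $\|g_n({\alpha^*}^n(\omega))\|^2\to\|f\|_2^2$ a.e.\ and in $L^1$.

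The second step constructs an eigenvector. By von Neumann's mean ergodic theorem applied to $U$, for each $\lambda\in\mathbb T$ the averages $\frac{1}{N}\sum_{n=0}^{N-1}\lambda^n g_n$ converge in norm to the projection $P_\lambda f$ of $f$ onto the $\lambda$-eigenspace of $S_H$ inside $\mathcal H_\infty$, so it suffices to exhibit some $\lambda\in\mathbb T$ with $P_\lambda f\neq 0$. I expect this to be the main obstacle and the place where the finiteness hypothesis on $m$ is essential. My plan is to argue fiberwise on the positive-measure set $\{m<\infty\}$: there the fibers of $\bigoplus_j L^2(\sigma_j,\mu)$ are finite-dimensional, so passing to a subsequence to obtain a.e.\ concentration $\|g_{n_k}(\eta)\|\to\|f\|_2$, a diagonal compactness argument produces a measurable pointwise limit $v(\eta)$ with $\|v(\eta)\|=\|f\|_2$ on a set of positive measure. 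A further extraction choosing $\lambda\in\mathbb T$ so that the modulated sequence $\lambda^{n_k}g_{n_k}$ also converges then yields a nontrivial element in $P_\lambda(\mathcal H_\infty)$, completing the construction.

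For the ``moreover'' assertion, suppose $S_H(f)=\lambda f$ with $\|f\|_2=1$ and $|\lambda|=1$. Then $g_n=\lambda^{-n}f$, so the first-step identity reduces to
\[
\|f({\alpha^*}^n(\omega))\|^2 = \frac{1}{N^n}\sum_{{\alpha^*}^n(\zeta)=1}\|f(\omega\zeta)\|^2,
\]
and the right side converges a.e.\ to $1$. For each $\varepsilon>0$, the set $A_\varepsilon:=\{|F-1|>\varepsilon\}$ satisfies $\mu(A_\varepsilon)=\mu({\alpha^*}^{-n}(A_\varepsilon))$ by the $\alpha^*$-invariance of $\mu$, while the latter tends to $0$ by dominated convergence on the indicator $\mathbf 1_{\{|F({\alpha^*}^n(\omega))-1|>\varepsilon\}}$. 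Hence $\mu(A_\varepsilon)=0$ for every $\varepsilon>0$, giving $\|f(\omega)\|=1$ a.e.
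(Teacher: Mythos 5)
Your easy direction and your proof of the ``moreover'' clause are essentially the paper's (the latter modulo one point of care: the identity $\frac1{N^n}\sum_{{\alpha^*}^n(\zeta)=1}\|f(\omega\zeta)\|^2=\|g_n({\alpha^*}^n(\omega))\|^2$ requires interchanging the $\zeta$- and coordinate-sums after applying Proposition \ref{superfilter}, and the summands are not nonnegative once you expand the square over $j,j'$; the paper justifies the interchange only when the relevant coordinate sums are finite, i.e.\ for $n$ with $m({\alpha^*}^n(\omega))<\infty$ or for vectors with finitely many nonzero coordinates, and you should restrict to such $n$ as well). The reverse-martingale concentration argument in your first step is exactly the paper's.

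The genuine gap is in your second step, and it is the heart of the theorem. A unitary operator on an infinite-dimensional Hilbert space need not have any eigenvector (its spectral measure can be purely continuous), so there is no reason a priori that $P_\lambda f\neq 0$ for some $\lambda\in\mathbb T$; the mean ergodic theorem by itself buys you nothing here. Your proposed repair does not close this: a.e.\ convergence of the fiber norms $\|g_{n}({\alpha^*}^{n}(\omega))\|$ happens at the \emph{moving} points ${\alpha^*}^{n}(\omega)$, so there is no fixed fiber in which to run a diagonal compactness argument, and the ``further extraction'' of a $\lambda$ making $\lambda^{n_k}g_{n_k}$ converge in norm is exactly the assertion that $U$ has point spectrum in disguise (the closed unit ball is not norm-compact, and weak subsequential limits may vanish). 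The paper's resolution is different and is the idea you are missing: it first shows $\sigma_1=\widehat\Gamma$ and takes $i_0$ maximal with $\sigma_{i_0}=\widehat\Gamma$; then, applying the concentration identity to an orthonormal family $f^1,\dots,f^k$ in $R_\infty$ and using that for infinitely many $n$ (by ergodicity) the fibers at ${\alpha^*}^n(\omega)$ live in a space of dimension $i_0$, it produces $k$ nearly orthonormal vectors in $\mathbb C^{i_0}$, forcing $k\leq i_0$. Hence $R_\infty$ is \emph{finite-dimensional}, and a unitary on a finite-dimensional space automatically has an eigenvector. Without this finite-dimensionality step your argument cannot conclude.
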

\begin{proof}
Write $R_n$ for the range of the isometry $S_H^n,$ and write
$R_\infty$ for the intersection $\cap R_n$ of the $R_n$'s.
By definition, $S_H$ is a {\bf pure isometry\/} if and only if $R_\infty = \{0\}.
$
\par
If $S_H$ has an eigenfunction $f,$ say $S_H(f)=\lambda f,$ with $\lambda\neq 0,$
then clearly $f$ belongs to the range of each operator $S_H^n,$
and hence $f\in R_\infty.
$
Therefore, $R_\infty\neq \{0\},$ and $S_H$ is not a pure isometry.
\par
Conversely, suppose $S_H$ is not a pure isometry.
We now adapt an argument in \cite{IJKLN}
that was based on the reverse martingale convergence theorem
(See Theorem 10.6.1 in \cite{dud}.)
For each $n\geq 1$, let $\mathcal M_n$ be the $\sigma$-algebra of Borel subsets of $\widehat\Gamma$ that are invariant
under multiplication by elements in the kernel of ${\alpha^*}^n.$
Let $f$ and $g$ be two nonzero vectors in $R_\infty,  $
and define a sequence of random variables $\{X_n\} \equiv \{X^{f,g}_n\}$ on $\widehat\Gamma$ by
\[
X_n(\omega) = \frac1{N^n} \sum_{{\alpha^*}^n(\zeta) = 1} \langle f(\omega\zeta) \mid g(\omega\zeta) \rangle.
\]
Then it follows directly that $X_n$ is $\mathcal M_n$-measurable, and
the conditional expectation of $X_n,$ given $\mathcal M_{n+1},$ equals $X_{n+1}.
$
Therefore, the sequence $\{X_n,\mathcal M_n\}$ is an integrable, reverse martingale.
Hence, using the reverse martingale convergence theorem, we have that the sequence
$\{X_n(\omega)\}$ converges almost everywhere and in $L^1$ norm to an integrable function $L$ on $\widehat\Gamma.$
\par
Clearly, $L(\omega\zeta) = L(\omega)$ for almost every $\omega$
and every $\zeta \in K=\cup_{n>0}\ker({\alpha^*}^n).$
Hence, the Fourier coefficient $c_\gamma(L)$ satisfies
$c_\gamma(L) = \gamma(\zeta)c_\gamma(L)$ for every
$\zeta\in K,$
implying that $c_\gamma(L)=0$ unless $\gamma(\zeta) = 1$
for all $\zeta\in K.$
Since $K$ is dense in $\widehat\Gamma,$ it then follows that $c_\gamma(L)=0$
for all $\gamma$ except $\gamma=0.$
Consequently, $L(\eta)$ is a constant function, and we have, from the $L^1$ convergence of the sequence $\{X_n\}$,
\begin{align*}
L(\eta) & = \int_{\widehat\Gamma} L(\omega)\, d\omega  \cr
& = \lim_{n\geq 1} \int_{\widehat\Gamma} X_n(\omega)\, d\omega \cr
& = \lim_{n\geq 1} \frac1{N^n} \sum_{{\alpha^*}^n(\zeta) = 1} \int_{\widehat\Gamma} \langle f(\omega\zeta) \mid g(\omega\zeta) \rangle\, d\omega \cr
& = \int_{\widehat\Gamma} \langle f(\omega) \mid g(\omega)\rangle \, d\omega \cr
& = \langle f \mid g \rangle.
\end{align*}
Therefore, the reverse martingale $X_n$
converges almost everywhere to the constant $\langle f \mid g\rangle.$
\par
For each $\omega,$ write $N_\omega$ for the set of all natural numbers $n$ for which $m({\alpha^*}^n(\omega)) < \infty.$
Since $m$ is finite on a set of positive measure, the ergodicity of $\alpha^*$ implies that $N_\omega$ is infinite for almost all $\omega.$
We show next that, for each $n\in N_\omega,$ there is a
different expression for $X_n(\omega).$
To wit, for each $n\in N_\omega ,$ define $f_n = {S_H^*}^n(f)$
and $g_n = {S_H^*}^n(g).
$
Since $S_H$ is a unitary operator on $R_{\infty}$, we have
\begin{align*}
& X_n(\omega)
= \frac1{N^n} \sum_{{\alpha^*}^n(\zeta) = 1} \langle f(\omega\zeta) \mid g(\omega\zeta)\rangle\\
& = \frac1{N^n} \sum_\zeta \left\langle  \prod_{k=0}^{n-1} H^t({\alpha^*}^k(\omega\zeta))f_n({\alpha^*}^n(\omega)) \mid
  \prod_{k'=0}^{n-1} H^t({\alpha^*}^{k'}(\omega\zeta))g_n({\alpha^*}^n(\omega)) \right\rangle\\
& = \frac1{N^n} \sum_\zeta \sum_j
\sum_i \left[\prod_{k=0}^{n-1} H^t({\alpha^*}^k(\omega\zeta))\right]_{j,i}
\overline{\sum_{i'}\left[\prod_{k'=0}^{n-1}H^t({\alpha^*}^{k'}(\omega\zeta))\right]_{j,i'}}
{f_n}_i({\alpha^*}^n(\omega)) \overline{{g_n}_{i'}({\alpha^*}^n(\omega))}.
\end{align*}
When interchanging the sums in the previous expression is justified,
we may continue this computation; then, using Proposition \ref{superfilter}, we would obtain
\begin{align*}
& \frac1{N^n} \sum_i \sum_{i'} {f_n}_i({\alpha^*}^n(\omega)) \overline{{g_n}_{i'}({\alpha^*}^n(\omega))} \sum_{\zeta} \sum_j \left[\prod_{k=0}^{n-1} H^t({\alpha^*}^k(\omega\zeta))\right]_{j,i}
\overline{\left[\prod_{k'=0}^{n-1}H^t({\alpha^*}^{k'}(\omega\zeta))\right]_{j,i'}} \cr
& = \sum_i {f_n}_i({\alpha^*}^n(\omega)) \overline{{g_n}_i({\alpha^*}^n(\omega))} \cr
& = \langle f_n({\alpha^*}^n(\omega)) \mid g_n({\alpha^*}^n(\omega)) \rangle.
\end{align*}
This gives the different expression for $X_n(\omega)$ that we want, whenever we can justify the interchanges of sums in the previous computations:
\begin{equation} \label{alternateexpression}
X_n(\omega) = \langle f_n({\alpha^*}^n(\omega)) \mid g_n({\alpha^*}^n(\omega)) \rangle.
\end{equation}
\par
The following calculation, which again uses Proposition \ref{superfilter}
and the Cauchy-Schwarz Inequality, shows that
the interchange of sums above is justified
whenever the sums on $i$ and $i'$ are finite sums.
Because of Proposition \ref{superfilter}, the sums on $i$ and $i'$ will be finite if
$m({\alpha^*}^n(\omega)) < \infty,$
and this is the case when $n\in N_\omega.$
Hence, the computation below will complete the derivation of
Equation (\ref{alternateexpression}).
Note also that the sums on $i$ and $i'$ will be finite sums if the vectors $f_n({\alpha^*}^n(\omega))$ and $g_n({\alpha^*}^n(\omega))$
only have a finite number of nonzero coordinates.
We will use this later on.
\begin{align*}
&\frac1{N^n} \sum_{i=1}^c \sum_{i'=1}^{c'} \sum_\zeta \sum_j
 \left| \left[\prod_{k=0}^{n-1} H^t({\alpha^*}^k(\omega\zeta))\right]_{j,i}
\overline{\left[\prod_{k'=0}^{n-1}H^t({\alpha^*}^{k'}(\omega\zeta))\right]_{j,i'}}
{f_n}_i({\alpha^*}^n(\omega)) \overline{{g_n}_{i'}({\alpha^*}^n(\omega))} \right| \cr
& \leq \frac1{N^n} \sum_{i=1}^c \sum_{i'=1}^{c'}
|{f_n}_i({\alpha^*}^n(\omega)) {g_n}_{i'}({\alpha^*}^n(\omega))| \cr
& \quad \quad \cdot \ \left( \sum_{\zeta, j} \left| \left[\prod_{k=0}^{n-1} H^t({\alpha^*}^k(\omega\zeta))\right]_{j,i} \right|^2
 \sum_{\tilde\zeta, \tilde j} \left| \left[\prod_{k'=0}^{n-1}H^t({\alpha^*}^{k'}(\omega\tilde\zeta))\right]_{\tilde j,i'} \right|^2 \right)^{1/2} \cr 
& = \sum_{i=1}^c  |{f_n}_i({\alpha^*}^n(\omega)) \chi_{\sigma_i}({\alpha^*}^n(\omega))| 
\sum_{i'=1}^{c'} |{g_n}_{i'}({\alpha^*}^n(\omega)) \chi_{\sigma_{i'}}({\alpha^*}^n(\omega))| \cr
& \leq \left( \sum_{i=1}^c  |{f_n}_i({\alpha^*}^n(\omega))|^2 \sum_{\tilde i=1}^c |\chi_{\sigma_{\tilde i}}({\alpha^*}^n(\omega))|^2 \right)^{1/2}
 \left( \sum_{i'=1}^{c'} |{g_n}_{i'}({\alpha^*}^n(\omega))|^2 \sum_{\tilde i'=1}^{c'} |\chi_{\sigma_{\tilde i'}}({\alpha^*}^n(\omega))|^2 \right)^{1/2} \cr
& \leq \sqrt{c c'} \|f_n({\alpha^*}^n(\omega))\| \|g_n({\alpha^*}^n(\omega))\| \cr 
& <  \infty,
\end{align*}
for almost every $\omega.$
\par
The first conclusion we can draw from Equation (\ref{alternateexpression}) is that for almost all $\omega$,
\begin{align*}
\lim_{n\in N_\omega} \langle f_n({\alpha^*}^n(\omega)) \mid g_n({\alpha^*}^n(\omega)) \rangle 
& = \lim_{n\to\infty} X_{n}(\omega) \cr
&= \langle f\mid g \rangle,
\end{align*}
or, setting $g=f,$ for $f$ a unit vector in $R_\infty,$
\[
\lim_{n\in N_\omega} \|f_{n}({\alpha^*}^{n}(\omega))\| = \|f\| =1.
\]
\par
A second conclusion we may draw is that
we must have $\sigma_1=\widehat\Gamma,$ i.e., $m(\omega)\geq 1$ a.e.
Indeed, if $m(\omega)=0$ for all $\omega$ in a set $F$
of positive Haar measure, then from the ergodicity of $\alpha^*$,
we must have ${\alpha^*}^n(\omega) \in F$ infinitely often for almost all $\omega,$
so that $\|f_n({\alpha^*}^n(\omega))\| = 0$ infinitely often.
But, since each such integer $n$ belongs to $N_\omega,$ this contradicts the first claim above.
\par
Now let $i_0$ satisfy $\sigma_{i_0}=\widehat\Gamma$ and $\sigma_{i_0+1}$ be a proper subset of $\widehat\Gamma$ of measure strictly less than 1.
(Of course $\sigma_{i_0+1}$ could be the empty set, if $m(\omega) \equiv i_0.
$)
Then, by a similar kind of ergodicity argument as was used above, we know that
for almost all $\omega,$ and for infinitely many values of $n$,
$[f({\alpha^*}^n(\omega))]_i = 0$ for all $i>i_0$
and all $f\in R_{\infty}.
$
Indeed, this is true whenever ${\alpha^*}^n(\omega) \notin \sigma_{i_0+1},$
and this occurs infinitely often for almost all $\omega.$
Moreover, each such $n$ belongs to $N_\omega.$
\par
Let $f^1,\ldots,f^k$ be orthonormal vectors in $R_\infty.
$
Then, for infinitely many sufficiently large $n,$ we must have that the $k$ $i_0$-dimensional vectors
\[
\{[f^p_{n}({\alpha^*}^{n}(\omega))]_1, \ldots,[f^p_{n}({\alpha^*}^{n}(\omega))]_{i_0}\}
\]
are nearly orthogonal and nearly of unit length.
Consequently, $k$ must be $\leq i_0.
$
Hence $R_\infty$ is finite dimensional, and therefore $S_H$ (a unitary operator on $R_\infty$) must have an eigenvector.
\par
To prove the final part of the proposition, let $f$ be a unit vector in $R_\infty.$
From the second claim above, we know that 
the coordinates $f_i$ of $f$
are all 0 for $i>i_0.$
Therefore, the interchanges of summations in the calculations above
are justified, and we obtain
\begin{equation*}
X^{f,f}_n(\omega)
 = \|f_n({\alpha^*}^n(\omega))\|^2,
\end{equation*}
so that
\[
\lim_{n\to \infty} \|f_n({\alpha^*}^n(\omega))\|^2 = \|f\|^2 = 1
\]
for almost all $\omega.$
\par
Finally, let $f$ be a unit eigenvector for $S_H$.  We have then that
\begin{align*}
\lim_{n\to\infty} \|f({\alpha^*}^n(\omega))\|^2
 & = \lim_{n\to\infty}  \|[S_H^n(f_n)]({\alpha^*}^n(\omega))\|^2 \cr
& = \lim_{n\to\infty}  \|f_n({\alpha^*}^n(\omega))\|^2 \cr
& =1.
\end{align*}
By the ergodicity of $\alpha^*,$ it follows that
$\|f(\omega)\| =1$ almost everywhere.
\end{proof}

\section{Pure isometries and the low pass condition}
\label{lowpass}

In this section, we use Theorem \ref{eigenvalue} to eliminate the need for a restrictive low-pass condition when building GMRAs from filters via the direct limit construction of \cite{IJKLN} and \cite{AIJLN}.
First we recall the definition:   
\begin{definition}
\label{GMRA}
A collection $\{V_j\}_{-\infty}^\infty$ of closed subspaces of $\mathcal H$
is called a {\emph generalized multiresolution analysis} (GMRA) relative to $\pi$ and $\delta$ if
\begin{enumerate}
\item\hskip2em $V_j\subseteq V_{j+1}$ for all $j.$
\item\hskip2em $V_{j+1}=\delta(V_j)$ for all $j.$
\item\hskip2em $\cap V_j=\{0\},$ and $\cup V_j$ is dense in $\mathcal H.$
\item\hskip2em $V_0$ is invariant under the representation $\pi.$
\end{enumerate}
The subspace $V_0$ is called the {\bf core subspace} of the GMRA $\{V_j\}.$
\end{definition}
In order to use the theorems from the previous section to build GMRA's from filters, we need to know that associated Ruelle operators are isometries.  The proof requires the additional assumption that the multiplicity function $m$ is finite a.e.  This hypothesis is standard in much of the literature.    
\begin{prop}
Assume $m(\omega) < \infty$ for almost all $\omega,$
and let $H$ be a filter relative to $m$ and $\alpha^*.$
Then
the Ruelle operator $S_H$ is an isometry of $\bigoplus_i L^2(\sigma_i,\mu)$ into itself.
\end{prop}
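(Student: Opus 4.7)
The plan is to verify the isometry identity $\|S_H f\|^2 = \|f\|^2$ by a direct calculation: expand the squared norm, apply an averaging trick over $\ker \alpha^*$, and invoke the filter equation. The hypothesis $m(\omega)<\infty$ almost everywhere enters only to guarantee absolute convergence of the resulting sums so that Fubini's theorem applies---this is where I expect the main technical obstacle to lie.

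The first step is to exploit the finiteness of $m$ to show that $H(\omega)$ is ``locally finite.'' At a.e.\ $\omega$, the column index is restricted to $j \leq m(\omega)$ by the support condition $\supp h_{i,j} \subseteq \sigma_j$, and the row index is restricted to $i \leq m(\alpha^*(\omega))$ because setting $i=i'$ in the filter equation gives
\[
\sum_{\alpha^*(\zeta)=1} \sum_j |h_{i,j}(\omega\zeta)|^2 = N \chi_{\sigma_i}(\alpha^*(\omega)),
\]
whose right-hand side vanishes when $i > m(\alpha^*(\omega))$, forcing $h_{i,j}(\omega)=0$ (take $\zeta=1$). Both bounds are finite a.e., so at almost every $\omega$ only finitely many entries $H_{i,j}(\omega)$ are nonzero. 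In particular, $[S_H f]_j(\omega) = \sum_i H_{i,j}(\omega) f_i(\alpha^*(\omega))$ is pointwise a finite sum, and all interchanges of summation and integration below are legitimate.

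The core computation then proceeds as follows. Expand
\[
\|S_H f\|^2 = \int_{\widehat\Gamma} \sum_{i,i',j} H_{i,j}(\omega) \overline{H_{i',j}(\omega)} f_i(\alpha^*(\omega)) \overline{f_{i'}(\alpha^*(\omega))} \, d\mu(\omega).
\]
Translation invariance of Haar measure gives the identity $\int F \, d\mu = \frac{1}{N} \int \sum_{\alpha^*(\zeta)=1} F(\omega\zeta) \, d\mu(\omega)$ for any integrable $F$. Applied to the integrand, the $f$-factors pass through the $\zeta$-average unchanged because $\alpha^*(\omega\zeta)=\alpha^*(\omega)$, and the filter equation collapses $\sum_\zeta \sum_j H_{i,j}(\omega\zeta) \overline{H_{i',j}(\omega\zeta)}$ to $N\delta_{i,i'}\chi_{\sigma_i}(\alpha^*(\omega))$. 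This yields
\[
\|S_H f\|^2 = \int_{\widehat\Gamma} \sum_i |f_i(\alpha^*(\omega))|^2 \chi_{\sigma_i}(\alpha^*(\omega)) \, d\mu(\omega) = \sum_i \int_{\sigma_i} |f_i(\eta)|^2 \, d\mu(\eta) = \|f\|^2,
\]
where the penultimate equality uses that $\alpha^*$, being a surjective continuous endomorphism of the compact group $\widehat\Gamma$, preserves Haar measure. The same calculation shows $S_H$ is well-defined and bounded; alternatively, to sidestep any residual convergence scruples, one can first verify the identity on the dense subspace of $f$ with only finitely many nonzero coordinates and then extend by isometric continuity.
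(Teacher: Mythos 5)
Your proof is correct and follows essentially the same route as the paper's: both establish the a.e.\ local finiteness of $H(\omega)$ from the support condition and the $i=i'$ case of the filter equation, then compute $\|S_H f\|^2$ by averaging over $\ker\alpha^*$, applying the filter equation, and using that $\alpha^*$ preserves Haar measure. No gaps.
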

\begin{proof}
Note that, because $m(\omega)<\infty$ almost everywhere,
the filter equation, together with the very definition of $h_{i,j},$ implies that $h_{i,j}(\omega) = 0$
if $j>m(\omega)$ or $i>m(\alpha^*(\omega)).$
Therefore, all the sums in the following calculation, that are inside integrals, are finite, so that
interchanges of these sums is allowed.
\begin{align*}
\|S_H(f)\|^2 & = \sum_j \int_{\sigma_j} |[S_H(f)]_j(\omega)|^2 \, d\omega \cr
& = \sum_j\int_{\sigma_j} |[H^t(\omega)f(\alpha^*(\omega))]_j|^2 \, d\omega \cr
& = \sum_j\int_{\sigma_j} \left|\sum_i H_{i,j}(\omega)f_i(\alpha^*(\omega))\right|^2 \, d\omega \cr
& = \sum_j\int_{\widehat\Gamma} \left|\sum_i h_{i,j}(\omega) f_i(\alpha^*(\omega))\right|^2\, d\omega \cr
& = \int_{\widehat\Gamma} \sum_j \left[\sum_i h_{i,j}(\omega) f_i(\alpha^*(\omega))\right]
\overline{\left[\sum_{i'} h_{i',j}(\omega) f_{i'}(\alpha^*(\omega))\right]} \, d\omega \cr
& = \frac1N \sum_{\alpha^*(\zeta)=1}  \int_{\widehat\Gamma} \sum_j \left[\sum_i h_{i,j}(\omega\zeta) f_i(\alpha^*(\omega))\right]
\overline{\left[\sum_{i'} h_{i',j}(\omega\zeta) f_{i'}(\alpha^*(\omega))\right]} \, d\omega \cr
& = \frac1N \int_{\widehat\Gamma} \sum_i \sum_{i'} \sum_\zeta \sum_j
h_{i,j}(\omega\zeta)\overline{h_{i',j}(\omega\zeta)} f_i(\alpha^*(\omega)) \overline{f_{i'}(\alpha^*(\omega))} \, d\omega \cr
& = \int_{\widehat\Gamma} \sum_i \chi_{\sigma_i}(\alpha^*(\omega)) |f_i(\alpha^*(\omega))|^2\, d\omega \cr
& = \sum_i \int_{\sigma_i} |f_i(\omega)|^2\, \cr
& = \|f\|^2,
\end{align*}
as claimed.
\end{proof}

\par
In earlier works (e.g., \cite{BJMP}, and \cite{IJKLN}),
a so-called ``low-pass condition'' on the filter $H$ was used
to guarantee that $S_H$ was a pure isometry.
This condition had various forms, but they all required something like $H$ being
continuous at the identity 1 in $\widehat\Gamma$
and the matrix $H(1)$ being diagonal
with $\sqrt N$'s at the top of the diagonal and 0's at the bottom.  Results from \cite{PSWX} and more recently \cite{BR} loosened these assumptions somewhat by separating out a phase factor.  We will show below that such assumptions on $H$ imply that $S_H$ can have no eigenvector, and so by Theorem \ref{eigenvalue}, $S_H$ must be a pure isometry.
 The following theorem gives quite general conditions on $H$ under which $S_H$ is a pure isometry, subsuming the conditions on $H(1)$ mentioned above as well as the results for a $1\times 1$ filter $H$ given in \cite{AIJLN}.  In particular, note that this theorem blurs the distinction between classical low-pass and high-pass filters by not requiring $H$ to take on specific values near the identity.

\begin{theorem}
\label{low-pass}
Let $H$ be a filter relative to $m$ and $\alpha^*$.  
Suppose there exists a positive number $\delta $ and a set $F\subseteq\widehat\Gamma$ of positive measure,
such that for all $\omega\in F$ the matrix $H(\omega)$ is in block form
\[
H(\omega) = \left(\begin{matrix} A(\omega) & B(\omega)\\
C(\omega) & D(\omega) \end{matrix}\right),
\]
where the four blocks satisfy the following:
\begin{enumerate}
\item\hskip2em $A(\omega)$ is a square expansive matrix with the property 
that $\|A(\omega)^{-1}\|\leq\frac1{1+\delta}.$
\item\hskip2em $\max(\|B(\omega)\|,\|C(\omega)\|,\|D(\omega)\|) < \epsilon =\min(\frac18,\frac\delta8).
$
(The norm here can either be the operator norm of a matrix
or the Euclidean norm.)
\end{enumerate}
Finally, assume that $F\cap \alpha^*(F)$ also has positive measure.
Then $S_H$ is a pure isometry, i.e.,
$S_H$ has no eigenvector.
\end{theorem}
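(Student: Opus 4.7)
The plan is to proceed by contradiction using Theorem \ref{eigenvalue}. Assume $S_H$ is not a pure isometry. Since Section \ref{lowpass} assumes $m$ finite a.e., Theorem \ref{eigenvalue} provides a unit eigenvector $f$ with $S_H f = \lambda f$, $|\lambda|=1$, and $\|f(\omega)\|=1$ for a.e.\ $\omega$. The eigenvalue equation unpacks to the pointwise identity
\[
H^t(\omega)\, f(\alpha^*(\omega)) = \lambda\, f(\omega) \qquad \text{for a.e.\ } \omega.
\]
The strategy is to use the block decomposition of $H$ on $F$ to extract two incompatible estimates on the ``top'' part of $f$ at points of $F \cap \alpha^*(F)$.

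For $\omega_1 \in F$, let $i_0$ denote the size of $A(\omega_1)$, and split $f(\omega_1) = (u_t,u_b)$ and $f(\alpha^*(\omega_1)) = (v_t,v_b)$ into their first $i_0$ and remaining coordinates. The eigenvalue equation splits as
\begin{align*}
\lambda u_t &= A^t(\omega_1)\, v_t + C^t(\omega_1)\, v_b, \\
\lambda u_b &= B^t(\omega_1)\, v_t + D^t(\omega_1)\, v_b.
\end{align*}
From the second line, $\max(\|B\|,\|D\|) < \epsilon$ together with $\|v_t\|+\|v_b\| \le \sqrt{2}\,\|v\| = \sqrt{2}$ gives $\|u_b\| \le \epsilon\sqrt{2}$, so $\|u_t\|^2 = 1 - \|u_b\|^2 \ge 1 - 2\epsilon^2$. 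From the first line, the expansivity estimate $\|A^t(\omega_1)\, v_t\| \ge (1+\delta)\|v_t\|$ together with $\|C^t(\omega_1)\, v_b\| \le \epsilon$ and $\|u_t\| \le 1$ yields, by the reverse triangle inequality,
\[
(1+\delta)\|v_t\| - \epsilon \;\le\; \|A^t(\omega_1)\, v_t + C^t(\omega_1)\, v_b\| \;=\; \|u_t\| \;\le\; 1,
\]
hence $\|v_t\| \le (1+\epsilon)/(1+\delta)$.

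To combine these estimates at a common point, take $\omega \in F \cap \alpha^*(F)$, which has positive measure by hypothesis. Applying the first estimate at $\omega_1 = \omega$ gives $\|f_t(\omega)\| \ge \sqrt{1-2\epsilon^2}$, while applying the second at a preimage $\omega_1 \in F$ with $\alpha^*(\omega_1) = \omega$ gives $\|f_t(\omega)\| = \|v_t\| \le (1+\epsilon)/(1+\delta)$. The two together force
\[
(1+\delta)\sqrt{1-2\epsilon^2} \;\le\; 1+\epsilon,
\]
which, with $\epsilon = \min(1/8, \delta/8)$, fails for every $\delta > 0$, producing the desired contradiction.

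The main obstacle I foresee is the bookkeeping around the block partition: a priori the size $i_0$ of $A(\omega_1)$ may vary with $\omega_1 \in F$, so one should first pass to a measurable subset $F' \subseteq F$ of positive measure on which $i_0$ is constant and for which $F' \cap \alpha^*(F')$ still has positive measure (using that $\alpha^*$ is measure-preserving and $N$-to-$1$). Once this reduction is made, the vector arithmetic above is routine, and the proof reduces to verifying the elementary polynomial inequality $(1+\delta)^2(1 - 2\epsilon^2) > (1+\epsilon)^2$, which one checks separately in the regimes $\delta \le 1$ (where $\epsilon = \delta/8$, so the expansion reduces to $(7/4)\delta + (61/64)\delta^2 - O(\delta^3) > 0$) and $\delta \ge 1$ (where $\epsilon = 1/8$, so the left side is at least $4 \cdot 31/32 = 31/8$ while the right side is $81/64$).
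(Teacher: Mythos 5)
Your argument is correct and is essentially the paper's own proof: both obtain a unit eigenvector via Theorem \ref{eigenvalue}, use $\|f(\omega)\|=1$ a.e.\ together with the block form on $F$ to bound the bottom block of $f$ by a multiple of $\epsilon$, and then play this off against the expansivity of $A$ at a point where $\omega$ and $\alpha^*(\omega)$ both lie in $F$ to reach a numerical contradiction. The only differences are cosmetic (slightly different intermediate constants and a squared form of the final inequality), and your concern about a varying block size does not arise since the theorem, as the paper's proof makes explicit, takes $A(\omega)$ to be $a\times a$ for a fixed $a$ on all of $F$.
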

\begin{remark}
The hypothesis of this theorem
clearly covers the previously cited cases where $H(\omega)$ is continuous and has the relevant diagonal at $\omega=1.$
\end{remark}

\begin{proof}
Suppose, by way of contradiction, that $f$ is a unit eigenvector for $S_H$ with eigenvalue $\lambda$, $|\lambda|=1$.
For each $\omega,$ write the vector $f(\omega)$
in the form $f(\omega) = (f^1(\omega),f^2(\omega)),$
where $f^1(\omega)$ is $a$-dimensional.
Because $f$ is an eigenvector for $S_H,$ we have 
\[
\lambda f(\omega) = [S_H(f)](\omega) = H^t(\omega) f(\alpha^*(\omega)).
\]
It follows from this, and the fact that $\|f(\omega)\|=1$ by the final conclusion of Theorem \ref{eigenvalue}, that for $\omega\in F$ we must have
\[
\|f^2(\omega)\| = \|B^t(\omega)f^1(\alpha^*(\omega)) + D^t(\omega)f^2(\alpha^*(\omega))\| < 2\epsilon.
\]
Hence, again because $\|f(\omega)\|=1$ for $\omega\in F$, we must have
$\|f^1(\omega)\| > 1-2\epsilon.
$
Since condition (1) on the matrix $A(\omega)$ implies that
$\|A(\omega) v\| \geq (1+\delta)\|v\|$ for
every $a$-dimensional vector $v$, we must have, for $\omega$ and $\alpha^*(\omega)$ both in $F,$
\begin{align*}
1 & \geq \|f^1(\omega)\| \cr
& = \|A^t(\omega)f^1(\alpha^*(\omega)) + C^t(\omega)f^2(\alpha^*(\omega))\| \cr
& > (1+\delta) \|f^1(\alpha^*(\omega))\| - 2\epsilon^2 \cr
& > (1+\delta)(1-2\epsilon) -2\epsilon \cr
& \geq 1+\delta -\frac\delta4 -\frac\delta4 - \frac\delta4 \cr
& = 1+\frac\delta4.
\end{align*}
We have arrived at a contradiction, and the theorem is proved.
\end{proof}

Theorem \ref{low-pass} can be used to build generalized multiresolution analyses with more general filters, using approaches that do not require an infinite product construction, such as the direct limit construction in \cite{IJKLN} and \cite{AIJLN}.  

\begin{example}
For another application of Theorem \ref{low-pass}, consider the Journ\'e filter system given by 
$$H(\omega)\;=\;\left(\begin{array}{rr}
h_{1,1}&h_{1,2}\\
h_{2,1}&h_{2,2}
\end{array}\right).$$
In the classical Journ\'e example described by Baggett, Courter, and Merrill in \cite{BCM}, 
$$h_{1,1}\;=\;\sqrt{2}e^{2\pi i\chi_{E_1}},\;h_{1,2}=0,$$
$$h_{2,1}\;=\;\sqrt{2}e^{2\pi i \chi_{E_2}},\;h_{2,2}\;=0,$$
where 
$$E_1=\left[-\frac{2}{7},-\frac{1}{4}\right)\cup \left[-\frac{1}{7},\frac{1}{7}\right)\cup \left[\frac{1}{4},\frac{2}{7}\right),$$
$$E_2= \left[-\frac{1}{2},-\frac{3}{7}\right)\cup \left[\frac{3}{7},\frac{1}{2}\right).$$

These are the classical filters in $L^2(\mathbb R)$ for dilation by $2$ that can be associated to the GMRA in $L^2(\mathbb R)$ coming from the Journ\'e wavelet.  

We now apply a device very similar to that first used on pp. 259--260 of \cite{BJMP2}.
Choose a very small $\delta>0,$ and an even smaller $\varepsilon>0.$  Define $q:\mathbb T\to \mathbb R$ by

$$q(e^{2\pi ix})\;=\left\{\begin{array}{rrrrrrrrr}
{\sqrt{2}\sqrt{1-r^2},}&\mbox{if}\;x\;=0,\\
{0,}&\mbox{if }\;\frac{1}{7}-\varepsilon<x<\frac{3}{14}+\varepsilon,\\
{C^{\infty}\;\mbox{monotone decreasing},}&\mbox{if}\;0<x\;<\frac{1}{7}-\varepsilon,\\
{\sqrt{2},}&\mbox{if}\;\frac{2}{7}-\varepsilon<x<\frac{5}{14}+\varepsilon,\\
{C^{\infty}\;\mbox{monotone increasing},}&\mbox{if}\;\frac{3}{14}+\varepsilon<x\;<\frac{2}{7}-\varepsilon,\\
{0,}&\mbox{if}\;\frac{3}{7}-\varepsilon<x<\frac{3}{7}+\varepsilon,\\
{\sqrt{2}\cdot r,}&\mbox{if}\;x\;=\frac{1}{2},\\
{C^{\infty}\;\mbox{monotone increasing},}&\mbox{if}\;\frac{3}{7}+\varepsilon<x\;<\frac{1}{2},\\
{\sqrt{2-[q(e^{2\pi i(x+\frac{1}{2})})]^2},} &\mbox{if}\;-\frac{1}{2}\;<x\;<\;0.
\end{array}\right.$$
Here $r\in (0,1)$ is a number yet to be determined.
We now define generalized filters $(h^q_{i,j})$ by:
$$h^q_{1,1}(e^{2\pi ix})\;=q(e^{2\pi ix})e^{2\pi i\chi_{[-\frac{2}{7},\frac{2}{7})}(x)},$$
$$h^q_{2,1}(e^{2\pi ix})\;=\;\sqrt{2}e^{2\pi i\chi_{[-\frac{1}{2},-\frac{3}{7})\cup [\frac{3}{7},\frac{1}{2})}(x)},$$
$$h^q_{1,2}(e^{2\pi i x})\;=\;q(e^{2\pi i(x+\frac{1}{2})})e^{2\pi i\chi_{[-\frac{1}{7},\frac{1}{7})}(x)},$$
$$h^q_{2,2}(e^{2\pi ix})\;=\;0.$$
Denote the matrix $(h^q_{i,j}(z))$ by $H^q.$  A routine calculation shows that the filter equations of Baggett, Courter and Merrill are satisfied, i.e. 
$$\sum_{j=1}^2\sum_{k=0}^1h_{i,j}\left(e^{2\pi i\frac{x+k}{2}}\right)\overline{h_{i',j}\left(e^{2\pi i\frac{x+k}{2}}\right)}\;=\;\delta_{i,i'}2\chi_{_{\sigma_i}}(x),\;i=1,\;2,$$
where $\sigma_1=[-\frac{1}{2},-\frac{3}{7})\cup [-\frac{2}{7},\frac{2}{7})\cup [-\frac{3}{7}, \frac{1}{2}),$ and $\sigma_2=[-\frac{1}{7},\frac{1}{7}).$ 

We now take $A(z)=h^q_{1,1}(z),\;B(z)=h^q_{1,2}(z),\;C(z)=h^q_{2,1}(z),$ and
$D(z)=h^q_{2,2}(z)$ in the matrix $H=H^q.$ We want to determine a specific value $r\in (0,1)$ and   a set $F=\{e^{2\pi i x}: x\in\mathcal F\}$, where $\mathcal F\subset [-\frac{1}{2},\frac{1}{2})$ such that the hypotheses of Theorem \ref{low-pass} are satisfied.  We let $\mathcal F=[-\frac{1}{n},\frac{1}{n}],$ where $n\in\mathbb N$ is chosen so that $n\geq 7$ and $q(e^{2\pi ix})>\sqrt{2}\sqrt{1-2r^2},$ for all $x\in \mathcal F.$  This can be done by applications of the Intermediate Value Theorem, since $q$ is continuous.
It's clear that $F\cap \alpha^*(F)=F$ has positive measure.  Also we want to find $\delta>0$ such that 
$|h^q_{1,1}(e^{2\pi ix})|\geq 1+\delta$ and 
$\max(|h^q_{1,2}(e^{2\pi ix})|,|h^q_{2,1}(e^{2\pi ix})|, |h^q_{2,2}(e^{2\pi i x})|) < \epsilon =\min(\frac18,\frac\delta8),\;\forall x \in \mathcal F.$
Note that $h^q_{2,1}(e^{2\pi ix})$ and $h^q_{2,2}(e^{2\pi i x})$ are identically $0$ on $\mathcal F,$ so we need only show that $|h^q_{1,2}(e^{2\pi i x})|<\epsilon$ on $\mathcal F.$  Since $h^q_{1,2}(e^{2\pi i x})$ is continuous at $x=0,$ where its value is equal to $\sqrt{2}\cdot r,$ we choose $\mathcal F=[-\frac{1}{n},\frac{1}{n}]$ so that $\sqrt{2}\cdot r\leq h^q_{1,2}(e^{2\pi i x})<2\cdot r,\;\forall x\in \mathcal F.$ 

Having chosen $\delta>0,$ we thus must choose $r$ so that $\frac{1}{\sqrt{2}\sqrt{1-2r^2}}\leq \frac{1}{1+\delta}$ and $2r<\epsilon =\min(\frac18,\frac\delta8).$  So we first choose $r_1<\min(\frac{1}{16},\frac{\delta}{16}).$

For $r_2,$ as long as $\delta<\sqrt{2}-1,$ if we choose $r_2\leq \sqrt{\frac{\sqrt{2}-(1+\delta)}{1+\delta}},$ one can verify that 
$$\frac{1}{\sqrt{2}\sqrt{1-2r_2^2}}\leq \frac{1}{1+\delta}.$$

Finally, we choose $r=\min(r_1,r_2).$  
Then, $$2r<\min\left(\frac18,\frac\delta8\right),$$
and $$\frac{1}{\sqrt{2}\sqrt{1-2r^2}}\leq \frac{1}{1+\delta},$$ so that the conditions of Theorem \ref{low-pass} are satisfied, and $S_H$ is a pure isometry acting on $L^2(\sigma_1)\oplus L^2(\sigma_2).$  In fact, letting $r\to 0+,$ we can construct a one-parameter family of filter systems giving rise to pure isometries; when $r=0,$ we obtain exactly the filter system constructed in \cite{BJMP2}.

In Theorem 5 of \cite{IJKLN}, it is shown that given a pure isometry $S$ on a Hilbert space ${\mathcal K}$ together with a representation $\rho$ of a countable abelian group $\Gamma,$ such that $\delta^{-1}\rho_{\gamma}\delta=\rho_{\alpha(\gamma)}$ for all $\gamma\in \Gamma,$ then it was possible to construct a generalized multiresolution analysis via a direct limit process.  Taking $S=S_H,$ and $\Gamma=\mathbb Z,$ the desired hypotheses will be satisfied, and it follows that a GMRA can be constructed from the above filter system.  In a paper in preparation, the authors will present a more constructive approach to making the GMRA under the same hypotheses as in Theorem 5 of \cite{IJKLN}.

\end{example}

\section{Pure isometries and the trivial intersection property}

The following ``problem'' was first noticed by Baggett, Bownik and Rzeszotnik.
Suppose $\{\psi_k\}$ is a Parseval multiwavelet in $L^2(\mathbb R^d);$
i.e., the functions $\{\psi_{j,n,k}(x)\}\equiv \{\sqrt2^j \psi_k(2^jx+n)\}$ form a Parseval frame for all of
$L^2(\mathbb R^d).$
If $V_j$ is defined to be the closed linear span of the functions
$\{\psi_{l,n,k}\}$ for $l<j,$ then
these subspaces can be shown to satisfy all of the properties of a GMRA
except for the condition $\cap V_j=\{0\}.$
Bownik and Rzeszotnik demonstrated the delicacy of this condition in \cite{BR05} by constructing, for any $\delta>0$, a frame wavelet in $L^2(\R)$, with frame bounds of $1$ and $1+\delta$, that has a negative dilate space $V_0$ equal to all of $L^2(\R)$.  They showed in \cite{BR}, however, that a Parseval multiwavelet in $L^2(\R^d)$ generates a GMRA (that is, the trivial intersection property does hold) whenever the multiplicity function of the negative dilate space $V_0$ is finite on a set of positive measure.  In fact, Bownik proved in \cite{Bow} that the condition that $m$ is not identically $\infty$ a.e. implies $\cap_{j=1}^{\infty} D_j(V_0) = \{0\}$ in the more general setting where $D_jf(x) = f(A_jx)$ for a sequence $\{A_j\}$ of invertible $n\times n$ real matrices that satisfy $\|A_j\|\to 0$ as $j\to\infty$.  For a history of the intersection problem in $L^2(\R^d)$, see \cite{Bow08}.

This question about subspaces of $L^2(\mathbb R^d)$  obviously generalizes to a collection
$\{V_j\}$ of subspaces  of a Hilbert space that satisfy all the conditions for a GMRA
except the trivial intersection condition.
Below, we apply the results from Section \ref{pure} to show that this intersection is $\{0\}$
if certain extra assumptions hold.  In doing so, we extend some of the results mentioned the previous paragraph.

\par
Let $\Gamma$, $\alpha$, $\pi$, and $\delta$ be as in the previous sections.  We recall some implications of Stone's Theorem, whereby certain GMRAs give rise to an associated filter.  Let $\{V_j\}$ be a GMRA in a Hilbert space $\mathcal H,$ relative to the representation $\pi$ and the operator $\delta.$  Then, according to Stone's Theorem on unitary representations of abelian groups, there exists a finite, Borel measure $\mu$ (unique up to equivalence of measures) on $\widehat\Gamma,$ unique (up to sets of $\mu$ measure 0) Borel subsets $\sigma_1\supseteq \sigma_2 \supseteq \ldots$ of $\widehat\Gamma,$ and a (not necessarily unique) unitary operator 
$J:V_0 \to \bigoplus_i L^2(\sigma_i,\mu)$ satisfying
\[
[J(\pi_\gamma(f))](\omega) = \omega(\gamma) [J(f)](\omega)
\]
for all $\gamma\in\Gamma,$ all $f\in V_0,$ and $\mu$ almost all $\omega\in\widehat\Gamma.$
In this paper, we assume the measure $\mu$ is absolutely continuous with respect to Haar measure,
in which case we may assume that $\mu$ is the restriction of Haar measure to the subset $\sigma_1.$  

Write $C_i$ for the element of the direct sum space
$\bigoplus_j L^2(\sigma_j,\mu)$ whose $i$th coordinate is $\chi_{\sigma_i}$
and whose other coordinates are 0.
Write $\bigoplus_j h_{i,j}$ for the element $J(\delta^{-1}(J^{-1}(C_i))).$
The following theorem displays a connection between
GMRA structures and filters and will allow us to apply Proposition \ref{superfilter} and Theorem \ref{eigenvalue}.

\begin{theorem}\label{lowpassfilter}
Let the functions $\{h_{i,j}\}$ be as in the preceding paragraph.
Then the matrix $H=\{h_{i,j}\}$ is a filter relative to $m$ and $\alpha^*.$
Moreover, the operator $J\circ \delta^{-1}  \circ J^{-1}$
on $\bigoplus_i L^2(\sigma_i,\mu)$
is the corresponding Ruelle operator $S_H:$
\[
[J\circ \delta^{-1}  \circ J^{-1}(f)](\omega) = H^t(\omega) f(\alpha^*(\omega)).
\]
\end{theorem}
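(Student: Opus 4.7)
The plan is to establish the two assertions — the Ruelle formula and the filter identity — by transporting the GMRA structure through the Stone-theoretic unitary $J$. Set $T := J\circ\delta^{-1}\circ J^{-1}$. Since the GMRA axiom $V_{j+1}=\delta(V_j)$ gives $\delta^{-1}(V_0)=V_{-1}\subseteq V_0$, the operator $T$ is an isometry on $\bigoplus_j L^2(\sigma_j,\mu)$. The support condition $h_{i,j}|_{\widehat\Gamma\setminus\sigma_j}=0$ is automatic, since the $j$-th coordinate of any vector in $\bigoplus_j L^2(\sigma_j,\mu)$ lies in $L^2(\sigma_j,\mu)$.

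To obtain the Ruelle formula, I would transport the covariance relation $\delta^{-1}\pi_\gamma=\pi_{\alpha(\gamma)}\delta^{-1}$ through $J$. Using $[J\pi_\gamma f](\omega)=\omega(\gamma)[Jf](\omega)$ and the identity $\omega(\alpha(\gamma))=[\alpha^*(\omega)](\gamma)$, this conjugates to
\[
T\,M_{e_\gamma}\;=\;M_{e_\gamma\circ\alpha^*}\,T
\]
for every character $e_\gamma(\omega):=\omega(\gamma)$, where $M_\phi$ denotes pointwise multiplication. A standard density / bounded-convergence argument extends this to $T\,M_\phi=M_{\phi\circ\alpha^*}\,T$ for every bounded Borel $\phi$ on $\widehat\Gamma$. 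Decomposing a general $f=\bigoplus_i f_i$ as $\sum_i M_{f_i}(C_i)$ (with each $f_i$ extended by zero off $\sigma_i$) and applying this to $T(C_i)=\bigoplus_j h_{i,j}$ yields
\[
[T(f)](\omega)\;=\;\sum_i (f_i\circ\alpha^*)(\omega)\,\bigoplus_j h_{i,j}(\omega),
\]
whose $j$-th coordinate is precisely $[H^t(\omega)f(\alpha^*(\omega))]_j$.

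For the filter equation, I would pair the isometry identity against the Ruelle formula. On the one hand, by isometry of $T$, for trigonometric polynomials $\phi,\psi$,
\[
\langle T(\phi C_i),\,T(\psi C_{i'})\rangle\;=\;\langle \phi C_i,\,\psi C_{i'}\rangle\;=\;\delta_{i,i'}\int_{\sigma_i}\phi\overline\psi\,d\mu.
\]
On the other hand, inserting the Ruelle formula and using $h_{i,j}=0$ off $\sigma_j$ gives
\[
\langle T(\phi C_i),\,T(\psi C_{i'})\rangle\;=\;\int_{\widehat\Gamma}(\phi\overline\psi)(\alpha^*\omega)\sum_j h_{i,j}(\omega)\overline{h_{i',j}(\omega)}\,d\mu(\omega).
\]
Averaging the inner sum over translates by $\zeta\in\ker\alpha^*$, legal by translation invariance of $\mu$ and the identity $\alpha^*(\omega\zeta)=\alpha^*(\omega)$, replaces the integrand by one that factors through $\alpha^*$. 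Pushing both sides forward by the measure-preserving $N$-to-one map $\alpha^*$ and invoking density of the products $\phi\overline\psi$ in $L^1(\widehat\Gamma)$ forces the pointwise identity
\[
\frac1N\sum_{\alpha^*(\zeta)=1}\sum_j h_{i,j}(\omega\zeta)\overline{h_{i',j}(\omega\zeta)}\;=\;\delta_{i,i'}\chi_{\sigma_i}(\alpha^*(\omega)),
\]
which is the filter equation.

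The main obstacle I anticipate is not algebraic but bookkeeping: one must verify that the averaged integrand truly descends to a function of $\alpha^*(\omega)$ before invoking density, and that the change-of-variable under $\alpha^*$ is applied consistently on both sides. The extension of the intertwining $T\,M_\phi=M_{\phi\circ\alpha^*}\,T$ from characters to bounded Borel multipliers is routine (trigonometric polynomials are uniformly dense in $C(\widehat\Gamma)$, and a monotone class argument upgrades to bounded Borel functions), but must be in place before the decomposition $f=\sum_i M_{f_i}(C_i)$ is invoked.
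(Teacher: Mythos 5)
Your proposal is correct, but it organizes the argument in the reverse order from the paper and derives the filter equation by a different mechanism, so a comparison is worth recording. The paper proves the filter equation \emph{first} and independently of the Ruelle formula: it sets $F_{i,i'}(\omega)=\sum_{\alpha^*(\zeta)=1}\sum_j h_{i,j}(\omega\zeta)\overline{h_{i',j}(\omega\zeta)}$, observes $F_{i,i'}\in L^1$ by Cauchy--Schwarz, and computes its Fourier coefficients directly, converting $\int h_{i,j}\overline{h_{i',j}}\,\omega(-\alpha(\gamma))\,d\omega$ back into the inner product $\langle J^{-1}(C_i)\mid \pi_\gamma(J^{-1}(C_{i'}))\rangle$ in $\mathcal H$ via unitarity of $J$, the covariance relation, and the fact that $\delta^{-1}$ is isometric on $V_0$; uniqueness of Fourier coefficients then gives the filter equation. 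Only afterwards does it establish the Ruelle formula, by exactly the chain you describe (characters, then Stone--Weierstrass, then $L^2$ limits, then summing the $C_i$ decomposition). You instead prove the Ruelle formula first via the multiplier intertwining $T\,M_\phi=M_{\phi\circ\alpha^*}\,T$, and then obtain the filter equation as a corollary of the isometry of $T$ on the $L^2$ side, averaging over $\ker\alpha^*$ and pushing forward under the measure-preserving map $\alpha^*$. The two routes use the same raw ingredients (covariance, isometry, density of characters, Fourier uniqueness), and your averaging-plus-pushforward step is just the paper's observation that $c_\gamma(F_{i,i'})=0$ unless $\gamma\in\range\alpha$ in disguise; what your ordering buys is a cleaner logical dependency (the filter equation becomes a one-line consequence of $\|T(\phi C_i)\|=\|\phi C_i\|$), at the cost of having to justify the full multiplier calculus before touching the filter equation, whereas the paper's Fourier-coefficient computation needs only the single vectors $C_i$ and no prior identification of $T$ with $S_H$. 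One small point to tighten: when you ``push forward by $\alpha^*$,'' you should note explicitly that the $\ker\alpha^*$-averaged function descends to a measurable function $\tilde G$ with $G=\tilde G\circ\alpha^*$ and that vanishing of $\int\phi\overline\psi\,(\tilde G-\delta_{i,i'}\chi_{\sigma_i})\,d\mu$ for all trigonometric polynomials forces $\tilde G=\delta_{i,i'}\chi_{\sigma_i}$ a.e.\ because products of characters exhaust all characters; this is the same Fourier-uniqueness step the paper makes explicit.
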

\begin{proof}
By definition we have
\begin{align*}
\mu(\sigma_i) & = \|C_i\|^2 \cr
& = \|J(\delta^{-1}(J^{-1}(C_i)))\|^2 \cr
& = \sum_j \int_{\widehat\Gamma} |h_{i,j}(\omega)|^2\, d\omega,
\end{align*}
which implies that $\sum_j |h_{i,j}(\omega)|^2$ is finite
for almost all $\omega.$
Write
\[
F_{i,i'}(\omega) = \sum_{\alpha^*(\zeta) = 1} \sum_j h_{i,j}(\omega\zeta) \overline{h_{i',j}(\omega\zeta)},
\]
and note, by the Cauchy-Schwarz Inequality, that $F_{i,i'}\in L^1(\mu),$
and that the Fourier coefficient $c_\gamma(F_{i,i'}) = 0$ unless $\gamma$ belongs to the range of $\alpha.$
We have that
\begin{align*}
c_{\alpha(\gamma)}(F_{i,i'}) & = \int_{\widehat\Gamma} F_{i,i'}(\omega) \omega(-\alpha(\gamma))\, d\mu(\omega) \cr
& =  \sum_\zeta \int_{\widehat\Gamma} \sum_j h_{i,j}(\omega\zeta) \overline{h_{i',j}(\omega\zeta)} \omega(-\alpha(\gamma))\, d\omega \cr
& = N\sum_j  \int_{\widehat\Gamma} h_{i,j}(\omega) \overline{h_{i',j}(\omega)} \omega(-\alpha(\gamma))\, d\omega \cr
& = N\langle J(\delta^{-1}(J^{-1}(C_i))) \mid J(\pi_{\alpha(\gamma)} (\delta^{-1}(J^{-1}(C_{i'}))))\rangle \cr
& = N\langle J^{-1}(C_i) \mid \pi_\gamma (J^{-1}(C_{i'}))\rangle \cr
& = N\langle C_i \mid \gamma C_{i'} \rangle \cr
& = N \delta_{i,i'} \langle C_i \mid \gamma C_{i'}\rangle \cr
& = N \delta_{i,i'} \int_{\widehat\Gamma} \chi_{\sigma_i}(\omega) \omega(-\gamma)\, d\omega \cr
& = N \delta_{i,i'} \int_{\widehat\Gamma} \chi_{\sigma_i}(\alpha^*(\omega)) \alpha^*(\omega)(-\gamma)\, d\omega,
\end{align*}
showing that the two $L^1$ functions $F_{i,i'}(\omega)$ and $N\delta_{i,i'}\chi_{\sigma_i}(\alpha^*(\omega))$
have the same Fourier coefficients, and hence are equal almost everywhere.
This verifies Equation (\ref{filterequation}).
It follows from the filter equation that
$h_{i,j}$ is supported on
${\alpha^*}^{-1}(\sigma_i).$
That is, $h_{i,j}(\omega) = 0$ unless both $\omega\in \sigma_j$ and $\alpha^*(\omega) \in \sigma_i.$
\par
Next, for any $\gamma,$ we have
\begin{align*}
[J(\delta^{-1}(J^{-1}(\gamma C_i)))](\omega)
& = [J(\delta^{-1}(\pi_\gamma(J^{-1}(C_i))))](\omega) \cr
& = [J(\pi_{\alpha(\gamma)}(\delta^{-1}(J^{-1}(C_i))))](\omega) \cr
& = \omega(\alpha(\gamma)) \bigoplus_j h_{i,j}(\omega) \cr
& = \omega(\alpha(\gamma)) \chi_{\sigma_i}(\alpha^*(\omega)) \bigoplus_j h_{i,j}(\omega) \cr
& = H^t(\omega) [\gamma C_i](\alpha^*(\omega)).
\end{align*}
Then, by the Stone-Weierstrass Theorem, we must have
\[
[J(\delta^{-1}(J^{-1}(fC_i)))](\omega) = H^t(\omega) [fC_i](\alpha^*(\omega))
\]
for every continuous function $f$ on $\widehat\Gamma.$
Then, by standard integration methods, this
equality holds for all $L^2$ functions $f.$
Finally, if $F=\bigoplus_i f_i,$ then
\begin{align*}
[J(\delta^{-1}(J^{-1}(F)))](\omega) 
& = \left[J\left(\delta^{-1}\left(J^{-1}\left(\sum_i f_iC_i\right)\right)\right)\right](\omega) \cr
& = \sum_i [J(\delta^{-1}(J^{-1}(f_iC_i)))](\omega) \cr
& = \sum_i H^t(\omega) f_i(\alpha^*(\omega) C_i(\alpha^*(\omega)) \cr
& = H^t(\omega) F(\alpha^*(\omega)) \cr
& = [S_H(F)](\omega),
\end{align*}
proving the second assertion.
\end{proof}
We call this matrix function $H$ a {\bf low-pass filter determined by the GMRA}.
\begin{remark}
The preceding proof works in a more general setting.
That is, we do not use all of the GMRA structure, particularly the
property that $\cap V_j = \{0\}.$  In particular, $\cap V_j=\{0\}$ if and only if $S_H = J \circ \delta^{-1} \circ J^{-1}$ is a pure isometry. 
\end{remark}

We introduce two more groups.
Let $D$ be the direct limit group determined by
$\Gamma$ and the monomorphism $\alpha$  of $\Gamma$ into itself.  (See for example \cite{IN}.)  For clarity, we make this construction explicit as follows.  

Let $\widetilde\Gamma$ be the set of all pairs $(\gamma,j)$ for $\gamma\in\Gamma$ and $j$ a nonnegative integer.
Define an equivalence relation on $\widetilde\Gamma$ by $(\gamma,k) \equiv (\gamma',k')$ if and only if
$\alpha^{k'}(\gamma) = \alpha^k(\gamma'),$ and let $D$ be the set of equivalence classes $[\gamma,k]$ of this relation.
Define addition on $D$ by
\[
[\gamma_1,k_1] + [\gamma_2,k_2] = [\alpha^{k_2}(\gamma_1) + \alpha^{k_1}(\gamma_2),k_1+k_2].
\]
One verifies directly that this addition is well-defined,
and that $D$ is an abelian group.
\par
Next, define a map $\widetilde\alpha$ on $D$ by $\widetilde\alpha([\gamma,k]) = [\alpha(\gamma),k].$
Again, one verifies directly that $\widetilde\alpha$ is well-defined and that it is an
isomorphism of $D$ onto itself.  Indeed, the inverse $\widetilde\alpha^{-1}$
is given by $\widetilde\alpha^{-1}([\gamma,k]) = [\gamma,k+1].$
\par
Define $G$ to be the semidirect product $D\rtimes \mathbb Z,$
where the integer $j$ acts on the element $d$ by $j\cdot d=\widetilde\alpha^j(d).$
Explicitly, the multiplication in $G$ is given by
\[
(d_1,j_1)\times (d_2,j_2) = (d_1+\widetilde\alpha^{j_1}(d_2),j_1+j_2).
\]
\par
As before, $\pi$ is a unitary representation of $\Gamma,$ acting in a Hilbert space $\mathcal H,$
and $\delta$ a unitary operator on $\mathcal H$ for which
\[
\delta^{-1}\pi_\gamma \delta = \pi_{\alpha(\gamma)}
\]
for all $\gamma\in \Gamma.$
Define a representation $\widetilde\pi$ on $G$ by
\[
\widetilde\pi_{(d,j)} = \widetilde\pi_{([\gamma,k],j)} = \delta^k\pi_\gamma \delta^{-k-j}.
\]
One verifies directly that this is a representation of $G.$  Note also that
$\widetilde\pi_{\widetilde\alpha(d)} = \delta^{-1} \widetilde\pi_d \delta$ and
$\widetilde\pi_{(d,j)} = \widetilde\pi_{d} \delta^{-j}.$

\par
Finally, for $|\lambda|=1$, define the (irreducible) unitary representation $P^{\lambda}$ of $G$ acting in the Hilbert space $l^2(D)$ by 
\[
[P^{\lambda}_{(d,j)}(f)](d') = \lambda^jf({\widetilde\alpha}^{-j}(d'-d)).
\]
\begin{remark}
The representation $P^{\lambda}$ is equivalent to the induced representation
$\mbox{Ind}^G_{\mathbb Z}\chi_{_{\lambda}},$ where $\chi_{_{\lambda}}$ is the character of the subgroup  ${\mathbb Z}$ determined by $\lambda$.

\end{remark}
\begin{theorem}\label{intersection}
Suppose $\{V_j\}$ is a collection of
closed subspaces of $\mathcal H$ that satisfy all the conditions for a GMRA, relative to $\pi$ and $\delta$,
except possibly the condition that $\cap V_j=\{0\}.
$
Assume that the measure $\mu$ associated to
the representation $\pi$ restricted to $V_0$ is Haar measure, and that the multiplicity function $m$ is finite on a set of positive measure.
Then the following conditions are equivalent:
\begin{enumerate}
\item $\cap V_j\neq \{0\}.
$
\item $\delta$ has an eigenvector.
\item The representation $\widetilde\pi$ of $G$ contains a subrepresentation
equivalent to the representation $P^{\lambda}$ for some $|\lambda|=1$.
\end{enumerate}
\end{theorem}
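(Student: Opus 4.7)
I will establish (1)$\Leftrightarrow$(2) using the results of Sections~\ref{pure} and the remark following Theorem~\ref{lowpassfilter} together with the mean ergodic theorem, and then (2)$\Leftrightarrow$(3) by an explicit intertwiner construction with the induced representation $P^\lambda$.

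For (1)$\Leftrightarrow$(2), the remark after Theorem~\ref{lowpassfilter} identifies $\cap V_j=\{0\}$ with $S_H$ being a pure isometry. Since $m$ is finite on a set of positive measure, Theorem~\ref{eigenvalue} then says this fails exactly when $S_H$, equivalently $\delta|_{V_0}$ via $J$, has an eigenvector; so (1)$\Rightarrow$(2) is immediate. The substance is (2)$\Rightarrow$(1), where an eigenvector $w$ of $\delta$ in $\mathcal H$ need not \emph{a priori} live in $V_0$. I would apply von Neumann's mean ergodic theorem to the unitary $\bar\mu\delta$ (with $\delta w=\mu w$, $|\mu|=1$). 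Letting $P_n$ denote the projection onto $V_n$ and setting $v=P_0w$, the identity $P_n=\delta^n P_0\delta^{-n}$ gives $\bar\mu^n\delta^n v=P_n w$. Taking Cesàro averages, the right side converges in norm to $w$ (since $P_n w\to w$ by density of $\cup V_n$), while the left side converges to $Q_\mu v$, the projection of $v$ onto $\ker(\delta-\mu I)$, which has norm at most $\|v\|$. The chain $\|w\|=\|Q_\mu v\|\le\|v\|=\|P_0 w\|\le\|w\|$ forces $w=P_0 w\in V_0$. Since $\delta^{-1}w=\bar\mu w\in \delta^{-1}(V_0)=V_{-1}$, we get $w\in V_{-1}$; iteration then places $w\in\cap V_j$.

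For (3)$\Rightarrow$(2), if $U:\ell^2(D)\to W\subset\mathcal H$ intertwines $P^\lambda$ with $\widetilde\pi|_W$, then $P^\lambda_{(0,1)}\delta_0=\lambda\delta_0$ together with $\widetilde\pi_{(0,1)}=\delta^{-1}$ makes $U\delta_0$ an eigenvector of $\delta$. For (2)$\Rightarrow$(3), take a unit eigenvector $w$, $\delta w=\mu w$, and set $\lambda=\bar\mu$. By the ergodic step, $w\in\cap V_j\subset V_0$, so $J(w)$ is a unit eigenvector of $S_H$, and Theorem~\ref{eigenvalue}'s final conclusion gives $\|J(w)(\omega)\|=1$ a.e. Hence $\{\pi_\gamma w\}_{\gamma\in\Gamma}$ is orthonormal, because $\langle\pi_\gamma w,\pi_{\gamma'}w\rangle=\int\omega(\gamma-\gamma')\,d\omega=\delta_{\gamma,\gamma'}$. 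Writing $v_d=\widetilde\pi_d w$ for $d\in D$ and using $\widetilde\pi_{[\gamma,k]}=\delta^k\pi_\gamma\delta^{-k}$ together with $\delta^{-k}w=\lambda^k w$, a short computation shows $\langle w,\widetilde\pi_{[\gamma,k]}w\rangle=\langle w,\pi_\gamma w\rangle$, a quantity that depends only on the class $[\gamma,k]\in D$ (using that $\alpha$ is a monomorphism) and vanishes unless that class is zero; hence $\{v_d\}_{d\in D}$ is orthonormal. Linearly extending $\delta_d\mapsto v_d$ then defines an isometry $U:\ell^2(D)\to\mathcal H$ onto $W=\clsp\{v_d:d\in D\}$. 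The group law of $G$ and the formula $\widetilde\pi_{(d',j)}=\widetilde\pi_{d'}\delta^{-j}$ yield $\widetilde\pi_{(d',j)}v_d=\lambda^j v_{d'+\widetilde\alpha^j(d)}$, which matches $P^\lambda_{(d',j)}\delta_d=\lambda^j\delta_{d'+\widetilde\alpha^j(d)}$, so $U$ intertwines $P^\lambda$ with $\widetilde\pi|_W$.

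I expect the hard part to be the mean ergodic argument in (2)$\Rightarrow$(1), which promotes an abstract $\delta$-eigenvector in $\mathcal H$ to one in $V_0$; this is the only step that is not essentially bookkeeping. Once that is in hand, the pointwise unit-norm conclusion of Theorem~\ref{eigenvalue} delivers the orthonormality of the $\widetilde\pi$-orbit of $w$, and the identification of the resulting $G$-action with $P^\lambda$ is a routine check of the semidirect product multiplication.
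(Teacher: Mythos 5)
Your proof is correct, and its overall architecture --- (1)$\Leftrightarrow$(2) via the Ruelle operator, the remark after Theorem \ref{lowpassfilter}, and Theorem \ref{eigenvalue}; (2)$\Leftrightarrow$(3) via an orthonormal $\widetilde\pi$-orbit mapped onto the point masses of $\ell^2(D)$ --- matches the paper's. You diverge genuinely in two implications. For (2)$\Rightarrow$(1), the paper avoids the mean ergodic theorem entirely: it decomposes $\mathcal H=\bigoplus_j W_j\oplus(\cap V_j)$ with $W_j=V_{j+1}\ominus V_j$, writes the eigenvector as $v=\sum_j v_j+v_\infty$, and notes that $\delta(v)=\overline\lambda v$ forces $v_{j+1}=\overline\lambda\,\delta(v_j)$, so all $\|v_j\|$ are equal and hence zero by square-summability; your Ces\`{a}ro-average argument reaches the same conclusion (any eigenvector of $\delta$ must lie in $\cap V_j$) but invokes von Neumann's theorem where a telescoping norm count suffices. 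For (2)$\Rightarrow$(3), the paper establishes orthogonality of $\{\pi_\gamma(v)\}$ for an \emph{arbitrary} eigenvector of $\delta$, using that $\pi$ is a subrepresentation of a multiple of the regular representation so that $\gamma\mapsto\langle\pi_\gamma(v)\mid v\rangle$ vanishes at infinity, combined with the $j$-invariance of $|\langle\pi_{\alpha^j(\gamma)}(v)\mid v\rangle|$; you instead first place $w$ in $V_0$ (via your ergodic step) and read orthogonality off the pointwise conclusion $\|J(w)(\omega)\|=1$ of Theorem \ref{eigenvalue}. Your route buys independence from the Riemann--Lebesgue argument (and from the claim that $\pi$ on all of $\mathcal H$ is dominated by the regular representation), at the cost of making (2)$\Rightarrow$(3) depend on (2)$\Rightarrow$(1), whereas the paper's implications are independent of one another; both versions are sound, and both land on $P^{\overline\lambda}$, which is all that (3) requires since it quantifies over $\lambda$.
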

\begin{proof}
We define the functions $\{h_{i,j}\}$ as in Theorem \ref{lowpassfilter} and reiterate that the matrix-valued function $H$ is a filter relative to the space $V_0$ and that the operator $J\circ \delta^{-1}\circ J^{-1}$ is the Ruelle operator.
  Then $S_H$ is a composition of isometries, and thus clearly an isometry from $\bigoplus_j L^2(\sigma_j,\mu)$ into itself.
  \par
  Assume (1), and thus that $S_H$ is not pure.  
  From Theorem \ref{eigenvalue}, we know that
$S_H$ has a unit eigenvector $f$:  
$S_H(f) = \lambda f,
$
and $|\lambda|=1.
$
(We are using here the hypothesis that $m(\omega)<\infty$ on a set of positive measure.)
For such an eigenfunction $f,$
$v=J^{-1}(f)$ is an eigenvector for $\delta^{-1}.$ 
This proves (1) implies (2).
\par
Assume (2), and let $v$ be a unit eigenvector for $\delta$ with eigenvalue $\lambda$.
Because $\pi$ is equivalent to a subrepresentation of some multiple
of the regular representation of $\Gamma,$ we must have, from the Riemann-Lebesgue Lemma, that the function
$\langle\pi_\gamma(w)\mid w\rangle$ vanishes at infinity on $\Gamma$ for every $w\in\mathcal H.
$
But, for each $\gamma,$ we have
\begin{align*}
|\langle \pi_{\alpha^j(\gamma)}(v) \mid v\rangle| 
& = |\langle \delta^{-j}\pi_\gamma \delta^j(v)\mid v \rangle| \cr
& = |\langle \pi_\gamma(v)\mid v\rangle|,
\end{align*}
implying then that $\langle \pi_\gamma(v) \mid v\rangle = 0$ for all $\gamma\neq 0,$
or equivalently that $\langle\pi_\gamma(v)\mid \pi_{\gamma'}(v)\rangle = 0$
unless $\gamma = \gamma'.
$
But now, for $d=[\gamma,k]\in D,$ we have
\begin{align*}
\langle \widetilde\pi_d(v)\mid v\rangle
& = \langle \widetilde\pi_{[\gamma,k]}(v) \mid v\rangle \cr
& = \langle \delta^k\pi_\gamma \delta^{-k}(v) \mid v\rangle \cr
& = \langle \pi_\gamma(v)\mid v\rangle \cr
& = 0
\end{align*}
unless $\gamma= 0.$  It follows then that
the vectors $\{\widetilde\pi_d(v)\}$ form an orthonormal set.
The span $X$ of these vectors is obviously an invariant subspace
for the representation $\widetilde\pi$ of $G.
$  Moreover,
we claim that the restriction of $\widetilde\pi$ to $X$ is equivalent to the representation $P^{\lambda}.
$
Thus, let $U$ be the unitary operator from $X$ onto $l^2(D)$
that sends the basis vector $\widetilde\pi_d(v)$ to the point mass basis vector $\epsilon_d$
in $l^2(D).$
We have
\begin{align*}
U(\widetilde\pi_{(d,j)}(\widetilde\pi_{d'}(v)))
& = U(\widetilde\pi_d \delta^{-j} \widetilde\pi_{d'}(v)) \cr
& = U(\widetilde\pi_d \widetilde\pi_{\widetilde\alpha^j(d')} \delta^ {-j}(v)) \cr
& =\lambda^{-j} U(\widetilde\pi_{d+\widetilde\alpha^j(d')} (v)) \cr
& = \lambda^{-j}\epsilon_{d+\widetilde\alpha^j(d')} \cr
& = P^{\overline\lambda}_{(d,j)}(\epsilon_{d'}) \cr
& = P^{\overline\lambda}_{(d,j)} (U(\widetilde\pi_{d'}(v))),
\end{align*}
showing the equivalence of the restriction of $\widetilde\pi$ to the subspace $X$
and the representation $P^{\overline\lambda}.$
This proves (2) implies (3).
\par
Assume (3).  Because the operator $\widetilde\pi_{(0,1)}$ is $\delta^{-1},$ it follows that
there is a nonzero vector $v\in \mathcal H$ for which $\delta^{-1}(v) = \widetilde\pi_{(0,1)}(v) = \lambda v.
$
(Just note that $P^{\lambda}_{(0,1)}$ has an eigenvector with eigenvalue $\lambda$.
This shows (3) implies (2).

Finally, assume (2), and let $v$ be an eigenvector for $\delta$ with eigenvalue $\lambda$.
Write $W_j$ for the orthogonal complement of $V_j$ in $V_{j+1}.$  Then
\[
\mathcal H = \bigoplus_{j=-\infty}^\infty W_j \oplus R_\infty,
\]
where $R_\infty = \cap V_j,
$
so we may write
\[
v = \sum_{j=-\infty}^\infty  v_j + v_\infty,
\]
where $v_j$ is the projection of $v$ onto the subspace $W_j,$
and $v_\infty$ is the projection of $v$ onto the subspace $R_\infty.
$
Applying the operator $\delta$ gives
\[
\sum_j  v_j +  v_\infty =  v =\overline\lambda \sum_j\delta(v_j) + \overline\lambda \delta(v_\infty),
\]
implying that $ v_{j+1}=\overline\lambda\delta(v_j),$ whence
$\|v_{j+1}\| = \|v_j\|$ for all $j.
$
Therefore $v_j=0$ for all $j,$ and hence
$v=v_\infty.
$  So, $R_\infty \neq \{0\},$ and (2) implies (1).
(This part of the proof uses neither the hypothesis on $m$
nor the one on $\pi.
$)
This completes the proof of the theorem.
\end{proof}
\begin{remark}
We note that the proofs of Theorems \ref{eigenvalue} and \ref{intersection} imply that $\delta$ has an eigenvector whenever $\cap V_j$ is nontrivial but finite-dimensional.  Therefore, $\cap V_j$ must be infinite-dimensional whenever it is nontrivial.
\end{remark}
\begin{example}
Let $\mathcal H = L^2(\mathbb R),$ let $\Gamma=\mathbb Z$
and $\pi$ be the representation of $\Gamma$ determined by translation.
Let $\alpha(k) = 2k,$ and define $\delta$ on $\mathcal H$ by
$[\delta(f)](x) = \sqrt2 f(2x).
$
Then $\delta^{-1} \pi_k \delta = \pi_{2k}.
$
Set $V_j$ equal to the subspace of $\mathcal H$ comprising those functions $f$
whose Fourier transform is supported in the interval $(-\infty,2^j).
$
Then the $V_j$'s satisfy all the conditions for a GMRA relative to $\pi$ and $\delta$ except the trivial intersection condition.
Indeed, the intersection is nontrivial, because $\cap V_j$ is the subspace of functions whose
Fourier transform is supported in the interval $(-\infty,0].
$
The subspace $V_0$ comprises the functions whose transform is
supported in the interval $(-\infty,1),$ and it follows that the multiplicity function
associated to the restriction of $\pi$ to this subspace is infinite everywhere.
Hence, since $\delta$ has no eigenvector, we see that we cannot drop the hypothesis that $m<\infty$ on a set of positive measure from the theorem above.
\end{example}
\begin{example}
Now let $\Gamma= \mathbb Z^2,$ let $\alpha(n,k) = (2n,2k),$ let
$\mathcal H = L^2(\mathbb R^2),$
let $\pi$ be the representation of $\Gamma$ on $\mathcal H$ determined by translation,
and let $[\delta(f)](x,y) = 2f(2x,2y).
$
Then, $\delta^{-1}\pi_\gamma \delta = \pi_{\alpha(\gamma)}.
$
Let $V_j$ be the subspace of $\mathcal H$ comprising the
functions whose Fourier transform is supported in the rectangle
$(-\infty,2^j)\times (-2^j,2^j).
$
Then, the multiplicity function associated to the restriction of
$\pi$ to $V_0$ is infinite everywhere, but
this time $\cap  V_j = \{0\}.
$
Indeed, if $f\in \cap V_j,$ then the support of the
Fourier transform of $f$ is supported on the negative $x$-axis, and such a function
is the 0 vector in $L^2(\mathbb R^2).
$
Hence, the finiteness assumption of Theorem \ref{intersection} on $m$ is not necessary
for the intersection to be trivial.
\end{example}
The following example shows that for a Hilbert space $\mathcal H\neq L^2(\mathbb R^d)$, a collection of subspaces $\{V_j\}$ can satisfy all of the properties of a GMRA except the trivial intersection property, even though the multiplicity function is finite almost everywhere.
  As Theorem \ref{intersection} shows, this is made possible by the presence of an eigenvector for the dilation $\delta$.
  The implication (3) implies (2) in Theorem \ref{intersection} suggests a method of construction of such examples.

\begin{example}
\label{fixed vector}
Let $\mathcal H=l^2(D)$, where $D$ is the group of dyadic rationals.
  The group $\Gamma=\mathbb Z$ acts on this space by  $\pi_{\gamma}(f)(x)=f(x-\gamma)$, and if we define $\alpha(\gamma)=2\gamma$ and $\delta f(x)=f(2x)$, we see that $\delta^{-1}\pi_\gamma \delta = \pi_{\alpha(\gamma)}$.
  Note that $f=\chi_{0}$ is a fixed vector for this dilation.  Now take $V_0$ to be the subspace $l^2(\mathbb Z)$, and $V_j=\delta^j V_0$.
  Since $\pi$ is the regular representation of $\Gamma$ on $V_0$, the multiplicity function $m$ is identically 1.
  We see that the fixed vector $\chi_0$ is a nonzero element of the intersection of the $V_j$.
\end{example}
Finally, to clarify how Theorem \ref{intersection} fits in with previous results about the intersection problem, let $\mathcal H,$ $\pi,$ and $\delta$ be as in the beginning of this section.
If $\{\psi_i\}$ is a set of vectors in $\mathcal H$ for which the collection
$\{\delta^j(\pi_\gamma(\psi_i))\}$ forms a frame for $\mathcal H,$ then
$\delta$ can have no eigenvector.  Indeed, if $v$ were an
eigenvector for $\delta,$ then the numbers
$|\langle v\mid \delta^j(\pi_\gamma(\psi_i))\rangle|$ are constant independent of $j.$
Hence, they must all be 0, contradicting the frame assumption.
Therefore, in the original context of the trivial intersection problem,
i.e., where the subspaces $V_j = \overline{\mbox{span}}\{\delta^k(\pi_\gamma(\psi_i)): \ k<j, \ \gamma, \ i\}$ are constructed from a Parseval wavelet $\{\psi_i\},$
there is no eigenvector for $\delta.$  Hence, if the intersection is nontrivial,
then it must be infinite-dimensional,
and the multiplicity function $m$ is infinite almost everywhere.  Theorem \ref{intersection} therefore extends Theorem 6.1 of \cite{BR} from the classical case of $\H=L^2(\R^d)$ to an abstract Hilbert space.

In the case when $\H=L^2(\R^d)$, the group $\Gamma = \Z^d$ acts by translation, and $\delta(f) = \delta_A(f) = |\det A|^{1/2} f(A\cdot)$ for an expansive integer matrix $A$, we need not require the subspaces $\{V_j\}$ to be constructed from a Parseval wavelet, as in the previous paragraph.  For any bounded neighborhood $E\subseteq \R^d$ of 0, there exists a positive integer $n$ such that $E\subset A^nE$. Since
\begin{equation*}
\int_E |f(x)|^2 \ dx = \int_{A^kE} |f(x)|^2 \ dx
\end{equation*}
for any $f$ that satisfies $\overline\lambda f(x) = |\det A|^{1/2} f(Ax)$, we see that $\delta_A$ can have no eigenvector, and, by Theorem \ref{intersection}, the assumption that $m$ is not identically $\infty$ a.e. implies the trivial intersection property.
  Thus, Theorem \ref{intersection} provides an alternative proof for the classical scenario of Theorem 1.1 of \cite{Bow}.

\end{document}